\newtheorem{theorem}{Theorem}
\newtheorem{lemma}{Lemma}
\newtheorem{corollary}{Corollary}
\newtheorem{proposition}{Proposition}
\theoremstyle{definition}
\newtheorem{example}{Example}
\newtheorem{remark}{Remark}
\newtheorem*{remark*}{Remark}
\newcommand{\eqdef}{\stackrel{\scriptscriptstyle\rm def}{=}}
\DeclareMathOperator{\diam}{diam}
\DeclareMathOperator{\dist}{Dist}
\DeclareMathOperator{\interior}{int}
\DeclareMathOperator{\Leb}{Leb}
\DeclareMathOperator{\Lip}{Lip}
\def\bN{\mathbb{N}}
\def\bR{\mathbb{R}}
\def\cM{\EuScript{M}}
\def\cL{\EuScript{L}}
\DeclareMathSymbol{\varnothing}{\mathord}{AMSb}{"3F}
\renewcommand{\emptyset}{\varnothing}
\author{Katrin Gelfert} \address{Institute of Mathematics, Polish Academy of Sciences, ul. \'{S}niadeckich 8, 00-950 Warszawa, Poland}
\email{gelfert@pks.mpg.de}
\urladdr{http://www.pks.mpg.de/~gelfert}
\author{{Micha\l} Rams} \address{Institute of Mathematics, Polish Academy of Sciences, ul. \'{S}niadeckich 8, 00-950 Warszawa, Poland}
\email{m.rams@impan.gov.pl}
\urladdr{http://www.impan.gov.pl/~rams}
\begin{document}

\title[]{Geometry of limit sets for expansive Markov systems}

\begin{abstract}
We describe the geometric and dynamical properties of expansive Markov systems.
\end{abstract}

\begin{thanks}
{This research of K.\,G. was supported by the grant EU FP6 ToK SPADE2. M.~R. was supported by EU FP6 ToK  SPADE2 and by the Polish KBN Grant No 2P0 3A 034 25. We are grateful for discussions with J.~Rodriguez-Hertz   that lead us to Corollary~\ref{cor:uh}.}
\end{thanks}

\keywords{iterated function systems, exceptional minimal sets,  thermodynamical formalism, nonuniformly hyperbolic systems, Hausdorff dimension}
\subjclass[2000]{Primary: %
37E05, 
28A78, 
58F18
}
\maketitle

\section{Introduction}\label{sec:intro}

Consider a smooth map of an interval into itself. If the map is uniformly expanding, then its geometry and dynamical properties are well understood (see~\cite{Pes:96} for a contemporary view and references). The case that there exist attracting periodic points simply leads to the appearance of basins of attraction with a rather simple dynamics.

 The situation which lies in-between those two cases is yet of considerable interest. 
In the field of dynamical systems, a certain class of such interval maps  have been studied by Urba\'nski~\cite{Urb:96}. His results include, in particular, the calculation of the Hausdorff and packing dimensions and Lebesgue, Hausdorff, and packing measures  of the limit set. 
Maps of this type appear also in the study of so-called Markov exceptional minimal sets in foliation theory (see for example~\cite{CanCon:88},~\cite{Mat:},~\cite{BisUrb:07}, see~\cite{Hur:} or~\cite{Wal:04} for extensive surveys of the field).    

The goal of this paper is to describe the geometric properties of the general class of systems studied by Cantwell and Conlon~\cite{CanCon:88}. It includes maps that may have marginally stable periodic orbits and hence are not (uniformly) expanding. As our main working assumption we require the map to be expansive (though not necessarily everywhere expanding). We describe its geometric properties such as the Hausdorff dimension of the limit set in terms of dynamical quantities like entropy, topological pressure, and conformal measures (see Section~\ref{sec:prelim} for the definition and notations). The following is our main result.

\begin{theorem}\label{Mmain}
	Let $\Lambda_Q$ be the limit set of a $C^{1+\Lip}$ expansive and transitive Markov system $(\Lambda_Q,f)$. 		
	Then the Hausdorff dimension of $\Lambda_Q$ satisfies
	\[\begin{split}
	\dim_{\rm H}\Lambda_Q
	=& \inf\{t\ge 0\colon P(-t\log\lvert f'\rvert)=0\}\\
	=&\inf\{t\ge 0\colon \text{there exists a }t\text{-conformal measure supported on }\Lambda_Q\}\\
	=&  \sup\{\dim_{\rm H}\mu\colon\mu \in\cM_{\rm E}, \, h_\mu(f)>0\} .
	\end{split}\]
	 Moreover, $\Lambda_Q$ is of zero Lebesgue measure.
	 
Under the additional assumption that $f$ is (piecewise) real analytic, we have $\dim_{\rm H}\Lambda_Q<1$ and the packing measure in this dimension is positive.
\end{theorem}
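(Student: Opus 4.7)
The plan is to reduce to the uniformly expanding case by inducing. Because $f$ is expansive and transitive, the only obstruction to uniform expansion comes from a (necessarily finite) collection of neutral periodic orbits; away from small neighborhoods of these the map $f$ is uniformly expanding. So I would construct an exhaustion of the ``hyperbolic part'' of $\Lambda_Q$ by compact, $f$-invariant, uniformly expanding, topologically mixing subsets $\Lambda_n$, for instance by truncating the first-return time to a Markov piece bounded away from the neutral orbits, or equivalently removing small neighborhoods of the neutral orbits and taking the maximal invariant set inside.

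On each $\Lambda_n$ the classical $C^{1+\Lip}$ theory applies verbatim: the unique root $t_n$ of $P|_{\Lambda_n}(-t\log\lvert f'\rvert)=0$ equals $\dim_{\rm H}\Lambda_n$, is realized by a unique equilibrium state $\mu_n$, and a $t_n$-conformal measure $\nu_n$ exists and is equivalent to $\mu_n$. Setting $s\eqdef\sup_n t_n$, upper semicontinuity of the pressure together with the increasing $\Lambda_n$ lets me identify $s$ with $\inf\{t\ge 0 : P(-t\log\lvert f'\rvert)=0\}$. The inequality $\dim_{\rm H}\Lambda_Q\ge s$ is then automatic, and the supremum identity in part (c) follows because the Manning--Bowen--Ledrappier formula $\dim_{\rm H}\mu = h_\mu(f)/\chi_\mu(f)$ (valid whenever $h_\mu>0$, by the Ruelle inequality and expansivity) applied to the equilibrium states $\mu_n$ makes $s$ coincide with $\sup\{\dim_{\rm H}\mu\colon\mu\in\cM_{\rm E},\ h_\mu(f)>0\}$.

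The principal obstacle is the matching upper bound $\dim_{\rm H}\Lambda_Q\le s$ together with the existence of an $s$-conformal measure on all of $\Lambda_Q$. My approach is to pass to a weak-$*$ limit $\nu$ of the $\nu_n$, which are tight by $C^{1+\Lip}$ bounded distortion, verify $s$-conformality of $\nu$ on $\Lambda_Q$ (the only delicate point being conformality at preimages of the neutral orbits, handled by pulling back through Markov branches and using the bounded distortion), and then obtain the Frostman-type estimate $\nu(B(x,r))\lesssim r^s$ uniformly by spreading any small ball through an inverse branch of definite size via Koebe distortion. This yields $\dim_{\rm H}\Lambda_Q\le s$ and, simultaneously, the conformal-measure characterization in the second line of the theorem. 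Zero Lebesgue measure then follows by comparing with $t=1$: either $1>s$ and so $\Leb(\Lambda_Q)\le\nu(\Lambda_Q)\cdot 0$ after a scaling argument, or $s=1$, in which case a density-point argument using the gaps inherited from the Markov partition at every scale (bounded distortion again) forces $\Leb(\Lambda_Q)=0$.

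For the final statement, in the piecewise real-analytic case each neutral periodic point admits a Fatou-coordinate normal form $z\mapsto z+az^{k+1}+O(z^{k+2})$, which yields sharp asymptotics $\lvert(f^n)'\rvert\sim n^{(k+1)/k}$ along orbits near the neutral point. These bounds present the induced system as a uniformly expanding countable Markov map whose summable pressure function is real-analytic on a neighborhood of $s$, and a direct calculation with these exponents rules out $s=1$, giving $\dim_{\rm H}\Lambda_Q<1$. For positivity of the $s$-dimensional packing measure I would use the converse of the Frostman lemma: the two-sided conformal-measure estimates for returned balls propagate, via the Fatou-coordinate asymptotics, to a uniform lower $s$-density $\liminf_{r\to 0}\nu(B(x,r))/r^s>0$, whence $\mathcal{P}^s(\Lambda_Q)>0$ by the standard density principle. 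Establishing this uniform lower density near the neutral orbits, where distortion is unbounded in the usual sense, is the most delicate step; everything else reduces to Markov combinatorics combined with Koebe.
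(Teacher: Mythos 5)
Your overall strategy---exhaust by uniformly expanding subsystems $\Lambda_n$ obtained by deleting cylinder neighborhoods of the parabolic orbits, take a weak-$*$ limit of the conformal measures $\nu_n$, and compare with the pressure equation---is close in spirit to the paper's construction of its $t_0$-conformal measure (Proposition~\ref{mmart}). But there are two genuine gaps.

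The first and more serious gap is your claim that the limiting measure $\nu$ satisfies a \emph{uniform} Frostman estimate $\nu(B(x,r))\lesssim r^s$ on all of $\Lambda_Q$. That estimate is false near the parabolic orbits and their preimages. In the mere $C^{1+\Lip}$ setting (where the second line of the theorem is asserted) the weak-$*$ limit $\nu$ can place an atom at a parabolic fixed point: the paper's Example~\ref{example} is built precisely to exhibit this, and in that situation the packing measure in dimension $1$ vanishes even though $\dim_{\rm H}\Lambda_Q=1$. Even when $\nu$ is non-atomic (the real analytic case), the local scaling of a parabolic conformal measure at a parabolic point of order $b$ is $\nu(B(x_0,r))\asymp r^{t_0(b+1)-b}$, and $t_0(b+1)-b< t_0$ because $t_0<1$; so there is no Koebe argument that rescues a uniform $\lesssim r^{t_0}$ bound there. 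The paper's route is designed precisely to sidestep this: Proposition~\ref{prop:dimH} proves $\dim_{\rm H}H\le t_c$ \emph{pointwise}, using only the hyperbolic instants attached to each $x\in H$ and a $t$-conformal measure that need not be supported on $H$ nor satisfy any global Frostman bound, and then Theorem~\ref{thm:hyp} identifies $\Lambda_Q\setminus H$ as a countable set (parabolic periodic and preperiodic points), which contributes nothing to Hausdorff dimension or Lebesgue measure. If you want your exhaustion scheme to close, you must insert an analogue of that last observation: the locus where your Frostman/bounded-distortion argument fails must be shown to be Hausdorff- and Lebesgue-negligible, which is exactly what the $H$ machinery provides.

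The second gap is a sign error in the packing-measure step. You write that a uniform lower bound $\liminf_{r\to 0}\nu(B(x,r))/r^s>0$ yields $P^s(\Lambda_Q)>0$. That is the wrong direction of the density theorem: a lower bound on the lower density bounds $P^s$ from \emph{above}. For positivity of $P^s$ you need an \emph{upper} bound on the lower density, namely $\liminf_{r\to 0}\nu(B(x,r))/r^s\le C<\infty$ on a set of positive $\nu$-measure, which is what the paper establishes in Corollary~\ref{cor:pt0} (for points of $H$, where bounded distortion along hyperbolic instants gives the two-sided comparison). This is also where the construction of a $t_0$-conformal measure with $\nu(\Lambda_Q\setminus H)=0$ is indispensable, and why the real analyticity hypothesis enters: the summability $\sum_\ell\lvert D^\pm_\ell(x)\rvert^{t_0-\varepsilon}<\infty$ needed to rule out atoms at parabolic points comes from the normal form asymptotics $\lvert D^\pm_\ell\rvert\asymp \ell^{-b/(b-1)}$, which need not hold for a general $C^\infty$ map.
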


Under the assumption of Theorem~\ref{Mmain}, the zero Lebesgue measure property has been proved by Cantwell and Conlon~\cite{CanCon:88}. The analogous dimension formulae of Theorem~\ref{Mmain} have been shown in the $C^{1+\varepsilon}$ case by Urba\'nski~\cite{Urb:96} under additional geometric assumptions. 
We note that we require that $f$ is of class $C^{1+\Lip}$ because our main tool for the proof of Theorem~\ref{Mmain} is the A.~Schwartz lemma. 
The additional analyticity assumption is necessary, as the following example shows.

\begin{example} \label{example}
There exists a $C^\infty$ parabolic iterated function system in $\bR$ with a limit set with Hausdorff dimension equal to 1, but of zero Lebesgue measure. (Full details are given at the end of Section~\ref{sec:5}.) 
\begin{figure}[h]\label{figuras}
      \psfrag{1.0}[b][rr]{$0.2$}
      \epsfig{figure=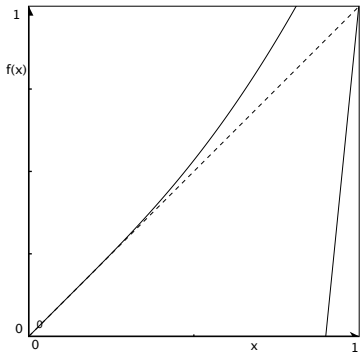, width = 0.4\linewidth}
      \caption{The iterated function system given by~\eqref{ugu}.}
      \label{exfig1}
       \end{figure}
\end{example}

Note that there exists an iterated function system in $\bR$ of smoothness $C^1$ with positive Lebesgue measure (see the example by Bowen~\cite{Bow:75}).

In Section~\ref{sec:hyptime} we introduce the main concept for our approach, so-called hyperbolic instants. We introduce the  set $H$ of points which have infinitely many hyperbolic instants and we prove some of its immediate properties. The conclusions in this section are quite general and, in fact, are valid for a much bigger class of dynamical systems than those considered in this paper. For expansive and transitive Markov systems, most points belong to the set $H$. Namely we have the following theorem.

\begin{theorem} \label{thm:hyp}
	Under assumptions of Theorem \ref{Mmain}, a point belongs to $\Lambda_Q\setminus H$ if and only if it is either a parabolic periodic point or parabolic preperiodic point.
\end{theorem}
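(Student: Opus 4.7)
I prove the two directions separately, relying on the Pliss-type properties of hyperbolic instants developed in Section~\ref{sec:hyptime} and on the A.~Schwartz distortion lemma, which is the reason for assuming $C^{1+\Lip}$ regularity.

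\emph{The easy direction.} Assume that $p$ is a parabolic periodic point of period $q$, i.e., $\lvert (f^q)'(p)\rvert = 1$. Then along the orbit of $p$ the derivatives $\lvert (f^n)'(p)\rvert$ form a bounded sequence, so the exponential growth condition $\lvert (f^n)'(p)\rvert \ge \lambda^n$ (with $\lambda>1$) underlying the definition of hyperbolic instant is satisfied only finitely often; hence $p\notin H$. If $y$ is parabolic preperiodic with $f^m(y)=p$, the chain rule gives $\lvert (f^{m+n})'(y)\rvert = \lvert (f^n)'(p)\rvert\cdot\lvert (f^m)'(y)\rvert$, which is bounded in $n$, so $y\notin H$ as well.

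\emph{The harder direction.} Let $x\in \Lambda_Q\setminus H$. By expansivity, transitivity and the finiteness of the Markov partition, the set $\Pi$ of parabolic periodic points of $f$ is finite, and there exists a neighborhood $V$ of $\Pi$ together with $\delta>0$ such that $\lvert f'(y)\rvert \ge 1+\delta$ for every $y\in \Lambda_Q\setminus V$. Suppose first, for contradiction, that the forward orbit $\{f^n(x)\}$ visits $\Lambda_Q\setminus V$ for infinitely many $n$. Each such visit contributes at least $\log(1+\delta)$ to $\log\lvert (f^n)'(x)\rvert$. On each maximal run inside $V$, the local normal form of $f$ near $\Pi$ combined with the A.~Schwartz lemma (which requires precisely the $C^{1+\Lip}$ hypothesis) yields uniformly bounded distortion for backward iterates along the run, so that the accumulated log-derivative over a complete parabolic excursion is essentially non-negative up to a controlled error independent of the length of the run. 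Pliss's lemma, applied to the sequence $\log\lvert f'(f^n x)\rvert$ as in Section~\ref{sec:hyptime}, then extracts infinitely many indices $n_k$ at which $\lvert (f^{n_k})'(x)\rvert$ grows exponentially and at which the inverse branch of $f^{n_k}$ along the orbit extends with bounded distortion to a definite-sized neighborhood of $f^{n_k}(x)$; these are hyperbolic instants for $x$, contradicting $x\notin H$.

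Therefore the orbit of $x$ eventually remains in $V$. After shrinking $V$ so that its connected components $V_p$ are associated to distinct parabolic periodic orbits, the orbit of $x$ eventually lies in a single component $V_p$. Inside $V_p$ the Markov structure reduces $f$ to the single monotone branch contracting toward $p$; remaining in $V_p$ forever forces the forward itinerary of $x$ to eventually coincide with that of $p$. Expansivity of the Markov system then yields $f^N(x)=p$ for some $N$, and thus $x$ is a parabolic preperiodic point (including the case $x=p$).

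\emph{Main obstacle.} The delicate step is showing that long excursions inside $V$, where $\lvert f'\rvert$ may dip below but remains close to $1$, do not cause the cumulative $\log\lvert (f^n)'(x)\rvert$ to drop substantially. The A.~Schwartz distortion lemma, and hence the $C^{1+\Lip}$ assumption of Theorem~\ref{Mmain}, is exactly what makes this accounting possible and permits Pliss's argument to be applied globally along the orbit rather than piecewise.
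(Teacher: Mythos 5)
Your proposal has a circularity problem that undermines the hard direction. You invoke, as inputs, (i) the finiteness of the set $\Pi$ of parabolic periodic points and (ii) the existence of a neighborhood $V$ of $\Pi$ with $\lvert f'\rvert\ge 1+\delta$ on $\Lambda_Q\setminus V$. In the paper both of these are \emph{consequences} of Theorem~\ref{thm:hyp}: finiteness is Corollary~\ref{cor:finite}, and hyperbolicity in the absence of parabolic points is Corollary~\ref{cor:no} (indeed, the paper notes that the latter is a Ma\~{n}\'e-type statement that normally requires $C^2$; here it is derived \emph{from} the theorem you are trying to prove). Neither follows in advance from expansivity, transitivity, and finiteness of the Markov partition: a $C^{1+\Lip}$ map may very well have $\lvert f'\rvert\le 1$ on parts of $\Lambda_Q$ far from any periodic orbit, so the crucial step ``outside $V$ each iterate gains $\log(1+\delta)$'' is unjustified. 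The further claim that a parabolic excursion contributes ``essentially non-negative'' log-derivative up to a controlled error is itself a nontrivial assertion about the local structure near a parabolic orbit; it is the type of estimate one gets from a normal form (which requires more regularity than $C^{1+\Lip}$, and is precisely why the paper restricts its sharper conclusions to the real-analytic case). The A.~Schwartz lemma does not deliver this: it bounds distortion along a pullback whose images have summable diameters, which the paper verifies by a \emph{geometric} disjointness argument about gap images (Lemma~\ref{lem:geom}), not by a derivative estimate along a parabolic run.

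The paper's route is genuinely different and avoids all of this. Rather than trying to show that orbits that leave a parabolic neighborhood infinitely often accumulate expansion (a Pliss-style bookkeeping), the paper fixes a gap $G$ adjacent to $\Lambda_Q$, and shows via Lemma~\ref{lem:geom} plus the Schwartz lemma (Lemma~\ref{lem:sch}) that every point returning infinitely often to a small cylinder $\Delta_n(x)$ next to that gap either is a preimage of a parabolic periodic endpoint of $G$, or lies in $H$ (because pullbacks of $G\cup K$ have uniformly bounded distortion and the images of $K$ keep a definite size relative to that of $G$). It then throws away that cylinder, obtaining a strictly smaller Markov system $\Lambda_{Q'}$, and inducts (Lemma~\ref{lem:key}, Proposition~\ref{prop:irred}, Lemma~\ref{lem:incl}). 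This bootstrap terminates in finitely many steps and is what simultaneously proves the theorem, the finiteness of parabolic periodic points, and uniform hyperbolicity in their absence. Also be careful with the easy direction: a hyperbolic \emph{instant} is not defined by exponential derivative growth but by bounded distortion together with definite expansion of a small ball; the paper's argument in Proposition~\ref{prop:dist} is to observe that if a preimage of a parabolic periodic point were in $H$, the balls $B(y,r_k)$ would have diameters bounded from below, contradicting $r_k\to 0$, which is the correct way to run that direction.
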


We now sketch the contents of the paper.
In Section~\ref{sec:prelim} we introduce the main objects of our study, namely Markov systems, topological pressure, and conformal measures.
In Section~\ref{sec:hyptime} we define the set $H$ and prove some of its main properties. In Section~\ref{sec:4} we prove Theorem~\ref{thm:hyp}. The proof is based on ideas from Cantwell and Conlon~\cite{CanCon:88} and Matsumoto~\cite{Mat:}.
Finally, in Section~\ref{sec:5} we prove Theorem~\ref{Mmain}. For this we use Theorem~\ref{thm:hyp} together with now-standard methods from~\cite{Urb:96,DenUrb:91b}.

\section{Preliminaries}\label{sec:prelim}

We consider a general geometric construction of a Cantor-like set, 
modelled on a Markov geometric construction (a subshift of
finite type).

\subsection{Iterated function systems}
 
Let $I\subset \bR$ be a closed interval and $I_1$, $\ldots$, $I_p$ be a finite
collection of pairwise disjoint closed subintervals of $I$. 
Let $\{ g_i\}_{i=1}^p$ be a collection of bijective $C^{1+\Lip}$ diffeomorphisms
with 
range $R( g_i)\subset \interior(I_i)$ and domain $D( g_i)=\bigcup_{j=1}^p I_j$.
Such a collection is called an \emph{iterated function system}.
We will denote
\[
 g_{i_1\ldots i_n}\eqdef  g_{i_1}\circ\cdots\circ  g_{i_n}.
\]

We consider the associated topological Markov chain $\sigma\colon
\Sigma\to\Sigma$ defined by $\sigma(i_1i_2\ldots)=(i_2i_3\ldots)$ on the
set 
\[
\Sigma
\eqdef\{1,\ldots,p\}^\bN .
\]

We denote $\Sigma_n=\{1,\ldots,p\}^n$ and $\Sigma_*=\bigcup_{n=0}^\infty
\Sigma_n$, where we use the convention $\Sigma_0=\{\emptyset\}$.
For each $(i_1\ldots i_n)\in\Sigma_n$ we define
\[
\Delta_{i_1\ldots i_n}\eqdef  g_{i_1\ldots i_{n-1}}(I_{i_n})
\]
and $\Delta_\emptyset=I$. 

We define
\[
\Lambda 
\eqdef \bigcap_{n=1}^\infty\bigcup_{(i_1\ldots i_n)\in \Sigma_n}\Delta_{i_1\ldots i_n}.
\]
We set
\[
f(x) \eqdef  g_i^{-1}(x) \quad\text{ when }x\in R( g_i).
\]
Given $x\in \Lambda$, we denote by $\Delta_n(x)$ the unique set $\Delta_{i_1
  \ldots i_n}$ containing $x$. 

Given an iterated function system, for each $(i_1i_2\ldots)\in\Sigma$ the sets $\Delta_{i_1\ldots i_n}$ form a descending sequence of non-empty compact sets, and in general the set $\bigcap_{n=1}^\infty \Delta_{i_1\ldots i_n}$ is either a singleton or a non-degenerate interval. We always assume that $\bigcap_{n=1}^\infty \Delta_{i_1\ldots i_n}$ is a singleton, that is, that
\begin{equation}\label{eq1}
d_n\eqdef
\max_{(i_1\ldots i_n)}\lvert\Delta_{i_1\ldots i_n}\rvert\to 0 \text{ as } n\to\infty,
\end{equation}
and in particular that $f|\Lambda$ is an expansive map. Recall that $f|\Lambda$ is called \emph{expansive} if there exists a positive constant $\delta$ such that, for any $x$, $y\in\Lambda$ satisfying $\lvert f^k(x)-f^k(x)\rvert\le \delta$ for every $k\ge 1$, we have $x=y$. We obtain a coding map $\pi\colon\Sigma\to\Lambda$ given by 
\[
\pi(i_1i_2\ldots)
=\bigcap_{n=1}^\infty  g_{i_1}\circ\cdots\circ  g_{i_{n-1}}(I_{i_n})
=\bigcap_{n=1}^\infty \Delta_{i_1\ldots i_n}.
\]
We call the sequence $\pi^{-1}(x)$ the \emph{symbolic expansion} of $x\in\Lambda$.

We remark that for an iterated function system for which the maps $ g_1$, $\ldots$, $ g_p$ are not uniformly contracting on $\Lambda$,  the thermodynamic formalism of the
associated symbolic dynamics does not apply since the coding map $\pi$ does
not in general preserve the H\"older continuity 
of potentials.   

We further remark that it would be enough to assume only that $g_i(\bigcup I_j)\subset I_i$.
For such systems one can increase all $I_i$ and expand domains of $g_i$ in such a way as to obtain $C^{1+\Lip}$ (but not necessarily $C^2$, even if the original system was smoother!) expansive iterated function system $(\hat{I_i}, \hat{g}_i)$ with the same $\Lambda$ and satisfying $\hat{g}_i(\bigcup \hat{I}_j)\subset \interior \hat{I}_i$.

\subsection{Markov systems}

We call an $n$-tuple $(i_1\ldots i_n)\in\Sigma_n$ a \emph{word} of \emph{length} $n$.
Let $Q\subset \Sigma_*$ be a finite set 
and denote by $l(Q)$ the maximal length of words from $Q$.
We denote by $\Sigma_Q$ the set of all infinite sequences from $\Sigma$ that do not contain any of the words from $Q$ as a sub-word:
\[
\Sigma_Q
\eqdef\left\{(i_1i_2\ldots)\in\Sigma\colon (i_ni_{n+1}\ldots i_{n+k-1})\notin Q
                \text{ for every }n,k\in\bN\right\}.
\]

Naturally, the same set $\Sigma_Q$ may be obtained for different choices of $Q$.
We will denote
\[
l(\Sigma_Q)\eqdef\min l(Q),
\]
where the minimum is taken over all possible choices of $Q$ resulting in the same $\Sigma_Q$.
We call the (not necessarily unique) set $Q$ for which the minimum is achieved a \emph{representation} of $\Sigma_Q$.
The proof of the following lemma is left to the reader.

\begin{lemma} \label{lem:incl}
Assume $Q$ is a representation of $\Sigma_Q$. For every $Q'\supset Q$ satisfying $l(Q')=l(Q)$ we have 
\[
l(\Sigma_{Q'})\leq l(\Sigma_Q).
\]
\end{lemma}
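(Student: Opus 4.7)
\textbf{Proof plan for Lemma~\ref{lem:incl}.} The statement is essentially an unpacking of the definition of $l(\Sigma_Q)$, and I do not expect any real obstacle; the only thing worth tracking is the distinction between a set of forbidden words and the subshift it defines.

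The plan is to apply the defining minimality property of $l(\Sigma_{Q'})$ directly to $Q'$ itself. Recall that, by definition,
\[
l(\Sigma_{Q'}) \;=\; \min\bigl\{\,l(R) : R\subset\Sigma_*,\ \Sigma_R=\Sigma_{Q'}\,\bigr\}.
\]
Since trivially $\Sigma_{Q'}=\Sigma_{Q'}$, the set $Q'$ is one admissible competitor in this minimization. Therefore
\[
l(\Sigma_{Q'}) \;\le\; l(Q').
\]

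Next, I would use the two hypotheses in succession: the assumption $l(Q')=l(Q)$ gives $l(\Sigma_{Q'})\le l(Q)$, and the assumption that $Q$ is a representation of $\Sigma_Q$ means exactly that the minimum in the definition of $l(\Sigma_Q)$ is attained at $Q$, i.e.\ $l(Q)=l(\Sigma_Q)$. Chaining these gives
\[
l(\Sigma_{Q'}) \;\le\; l(Q') \;=\; l(Q) \;=\; l(\Sigma_Q),
\]
which is the desired inequality.

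Note that the hypothesis $Q'\supset Q$ plays no role in this argument; it is kept in the statement presumably because the lemma is applied in the sequel to sets $Q'$ obtained from $Q$ by adding further forbidden words of length $\le l(Q)$, in which case one automatically has $\Sigma_{Q'}\subset\Sigma_Q$. The inclusion $Q'\supset Q$ is what makes the bound informative (otherwise one could trivially take $Q'=\varnothing$); but the inequality itself follows purely from the minimality in the definition of $l(\Sigma_{Q'})$ together with $l(Q')=l(Q)=l(\Sigma_Q)$.
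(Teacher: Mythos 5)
Your proof is correct. The paper leaves this lemma to the reader, so there is no written argument to compare against, but your derivation — $l(\Sigma_{Q'})\le l(Q')$ because $Q'$ is a competitor in the minimization, then $l(Q')=l(Q)=l(\Sigma_Q)$ by the two hypotheses — is the natural one and is complete. Your side observation is also accurate: the containment $Q'\supset Q$ is not used in the inequality itself; it is carried along because in the applications (Proposition~\ref{prop:irred}) $Q'$ is built by enlarging $Q$, which is what guarantees $\Sigma_{Q'}\subset\Sigma_Q$ and makes the bound useful in the inductive argument.
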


From the definition of $\Sigma_Q$ it follows that $\sigma(\Sigma_Q)\subset \Sigma_Q$, but this inclusion needs not to be an equality. We say that $\Sigma_Q$ is \emph{completely invariant} if and only if  $\sigma(\Sigma_Q)=\Sigma_Q$, and we call $\sigma|\Sigma_Q$ a \emph{subshift of finite type} in this case.
Given $(j_1\ldots j_m)\in\Sigma_\ast$, we denote by $j_1\ldots j_m \Sigma_Q$ the set of all infinite sequences from $\Sigma$ given by
\[
j_1\ldots j_m \Sigma_Q \eqdef
\{(j_1\ldots j_mi_1 i_2\ldots)\colon (i_1i_2\ldots )\in\Sigma_Q\}.
\]

\begin{proposition} \label{prop:irred}
For any $\Sigma_Q$ there exists a completely invariant set $\Sigma_{Q'}$ with $l(\Sigma_{Q'})\leq l(\Sigma_Q)$, and there exists a finite set $\widetilde{Q}\subset \Sigma_*$ such that
\[
\Sigma_{Q'}\subset \Sigma_Q \subset \bigcup_{(j_1j_2\ldots j_m)\in \widetilde{Q}} j_1j_2\ldots j_m \Sigma_{Q'}.
\]
\end{proposition}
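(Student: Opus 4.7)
The plan is to construct $\Sigma_{Q'}$ as the maximal subshift contained in $\Sigma_Q$ that is still cut out by forbidden patterns of length $L\eqdef l(\Sigma_Q)$. After replacing a representation $Q$ by the set of all length-$L$ extensions of its words (this preserves $\Sigma_Q$), I may assume every forbidden word has length exactly $L$. The key auxiliary object is, for each $n\ge 1$,
\[
C_n\eqdef\bigl\{(i_1\ldots i_L)\in\{1,\ldots,p\}^L\colon \exists\, y\in\Sigma_Q,\ y_n\ldots y_{n+L-1}=i_1\ldots i_L\bigr\}.
\]
Because $\sigma(\Sigma_Q)\subset\Sigma_Q$, the sequence $(C_n)$ is decreasing, and as a decreasing sequence of subsets of the finite set $\{1,\ldots,p\}^L$ it stabilizes to some $C^*$ for all $n\ge N_0$. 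I then take $Q'\eqdef\{1,\ldots,p\}^L\setminus C^*$, so $l(\Sigma_{Q'})\le L$, and I take $\widetilde{Q}$ to be the (finite) set of length-$(N_0-1)$ initial segments of sequences in $\Sigma_Q$.

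Three properties then have to be checked. First, $\Sigma_{Q'}\subset\Sigma_Q$: every length-$L$ subword of an $x\in\Sigma_{Q'}$ lies in $C^*$, hence actually appears inside some sequence of $\Sigma_Q$, and therefore contains no subword from $Q$. Second, the covering statement: for any $x\in\Sigma_Q$ and any $n\ge N_0$ the subword $x_n\ldots x_{n+L-1}$ lies in $C_n=C^*$, so $\sigma^{N_0-1}(x)\in\Sigma_{Q'}$ and $x$ belongs to the cylinder $x_1\ldots x_{N_0-1}\Sigma_{Q'}\subset\bigcup_{w\in\widetilde{Q}} w\Sigma_{Q'}$.

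The main step — the one I expect to be the real obstacle — is complete invariance of $\Sigma_{Q'}$. For this I need to produce, given $x\in\Sigma_{Q'}$, a letter $a$ with $ax\in\Sigma_{Q'}$, which reduces to finding $a$ so that $a\,x_1\ldots x_{L-1}\in C^*$. Since $x_1\ldots x_L\in C^*=C_{N_0+1}$, I pick $y\in\Sigma_Q$ witnessing this word at position $N_0+1$; then $y_{N_0}\,x_1\ldots x_{L-1}$ occurs at position $N_0$ in $y\in\Sigma_Q$, hence lies in $C_{N_0}=C^*$, and $a=y_{N_0}$ works. Everything else is an immediate consequence of the stabilization of $(C_n)$ and the reduction to uniform word length; the nonobvious content is really packaged into this step and the observation that $(C_n)$ is forced to stabilize.
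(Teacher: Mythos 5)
Your proof is correct, and it takes a genuinely different route from the paper's. The paper argues inductively: if $\Sigma_Q$ is not completely invariant, it finds a single word $(i_1\ldots i_{n-1})$ (a prefix of a non-extendable sequence) to adjoin to $Q$, shows $\Sigma_{Q'}\subset\Sigma_Q\subset\Sigma_{Q'}\cup i_1\Sigma_{Q'}$, and then repeats, terminating because there are only finitely many sets $Q''$ of bounded word length. Your argument instead normalizes all forbidden words to length $L=l(\Sigma_Q)$ and studies the decreasing sequence of ``legal word sets at position $n$,'' $C_n\subset\{1,\ldots,p\}^L$, which must stabilize to some $C^*$ at a finite stage $N_0$; you then take $Q'=\{1,\ldots,p\}^L\setminus C^*$ in one stroke. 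The two constructions produce the same object, but yours replaces the paper's bootstrapping loop with a single stabilization argument, and it has the virtue of making $\widetilde Q$ completely explicit (the length-$(N_0-1)$ prefixes of sequences in $\Sigma_Q$), whereas in the paper $\widetilde Q$ is accumulated implicitly over the iterations. Your key step --- producing a left-extension $a$ with $a\,x_1\ldots x_{L-1}\in C^*$ by pulling back a witness $y\in\Sigma_Q$ one position from level $N_0+1$ to level $N_0$ and using $C_{N_0}=C_{N_0+1}=C^*$ --- is sound, and all three properties ($\Sigma_{Q'}\subset\Sigma_Q$, the covering, complete invariance) are verified cleanly. This is a nice simplification.
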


\begin{proof}
Let $\Sigma_Q$ (with representation $Q$) be not completely invariant, that is, let us assume that
there exists such $(i_1i_2\ldots)\in \Sigma_Q$ that $(i\,i_1i_2\ldots)\notin \Sigma_Q$ for all $i\in \{1,\ldots,p\}$. 
Denoting $n=l(Q)$, we see that all the words $(i\,i_1i_2\ldots i_{n-1})$,
$i\in\{1,\ldots,p\}$, must contain a word from $Q$ as a sub-word. 
At the same time, $(i_1i_2\ldots i_{n-1})$ does not contain any word from $Q$
as a sub-word. 
Let $Q'=Q\cup \{(i_1i_2\ldots i_{n-1})\}$.
Note that we have $l(Q')= l(Q)$ and
\[
\Sigma_{Q'}\subset \Sigma_Q \subset \Sigma_{Q'} \cup i_1\Sigma_{Q'}.
\]
Moreover, by Lemma \ref{lem:incl} 
\[
l(\Sigma_{Q'})\leq l(\Sigma_Q).
\]

If $\Sigma_{Q'}$ is not completely invariant then we can repeat this procedure.
As there exist only finitely many choices of finite sets $Q''\subset\Sigma_\ast$ satisfying $l(Q'')\leq n$, we can repeat this procedure only finitely many times. This proves the proposition.
\end{proof}

Denote
\begin{equation}\label{defQ}
\Lambda_Q\eqdef \pi(\Sigma_Q).
\end{equation}
We have $f(\Lambda_Q)\subset \Lambda_Q$ (with equality if and only if $\Sigma_Q$ is completely invariant).
Given $Q$ and $x\in \Lambda_Q$, we call the sequence $(i_1i_2\ldots)\in \Sigma$ \emph{admissible} for $x$ if
\begin{equation}\label{admin} 
	 g_{i_n\ldots i_1}(x) \in \Lambda_Q \quad\text{ for all }n\in \bN.
\end{equation}
If $\Sigma_Q$ is completely invariant, then for every point $x\in \Lambda_Q$ there exists an admissible sequence.
The proof of the following lemma will be left to the reader.

\begin{lemma} \label{lem:adm}
If a sequence is admissible for $x$, then it is admissible for every $y\in \Delta_n(x)\cap \Lambda_Q$ with  $n=l(\Sigma_Q)-1$.
\end{lemma}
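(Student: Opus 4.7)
The plan is to fix a representation $Q$ of $\Sigma_Q$ of minimal length, so that every forbidden word has length at most $l(Q)=l(\Sigma_Q)=n+1$. Writing the symbolic expansions $\pi^{-1}(x)=(j_1j_2\ldots)$ and $\pi^{-1}(y)=(j'_1j'_2\ldots)$, the hypothesis $y\in\Delta_n(x)=\Delta_{j_1\ldots j_n}$ forces $j'_\ell=j_\ell$ for $\ell=1,\ldots,n$. A direct induction, using $g_i(\Delta_m(z))=\Delta_{ij_1\ldots j_m}$ whenever $z\in\Lambda$ has expansion starting with $j_1\ldots j_m$, shows that the symbolic expansion of $g_{i_k\ldots i_1}(x)$ is the reversed concatenation $(i_ki_{k-1}\ldots i_1j_1j_2\ldots)$, and analogously for $y$ with $j'$ in place of $j$. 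Since $\pi$ is injective (a consequence of~\eqref{eq1} together with the pairwise disjointness of the $I_i$), checking that a point of $\Lambda$ lies in $\Lambda_Q$ reduces to checking that its symbolic expansion avoids every word of $Q$.

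The key observation is that for every $k\ge 1$ the sequences associated to $g_{i_k\ldots i_1}(x)$ and $g_{i_k\ldots i_1}(y)$ coincide on positions $1,\ldots,k+n$ and diverge only from position $k+n+1$ onwards. I would derive admissibility of $(i_1i_2\ldots)$ for $y$ by contradiction: suppose a word from $Q$, of length $L\le n+1$, appears as a sub-word of the $y$-sequence at positions $s,s+1,\ldots,s+L-1$. If $s+L-1\le k+n$, the sub-word sits in the shared prefix, hence also occurs in $(i_k\ldots i_1j_1j_2\ldots)\in\Sigma_Q$, contradicting admissibility of $(i_1i_2\ldots)$ for $x$. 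If instead $s\ge k+1$, the sub-word is entirely contained in $(j'_1j'_2\ldots)=\pi^{-1}(y)\in\Sigma_Q$, contradicting $y\in\Lambda_Q$. The only remaining possibility, $s\le k$ and $s+L-1\ge k+n+1$, would force $L\ge n+2$, which is ruled out by $L\le l(Q)=n+1$.

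The main subtlety, and the step I would treat with most care, is that the choice $n=l(\Sigma_Q)-1$ is exactly what prevents forbidden words from bridging the interface between the shared prefix and the diverging tails; choosing a non-minimal representation of $\Sigma_Q$, or a smaller value of $n$, would break the straddling argument. Beyond this bit of combinatorial bookkeeping the proof is purely symbolic and uses no metric information on the iterated function system.
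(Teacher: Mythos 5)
Your proof is correct, and it is precisely the combinatorial argument the authors leave to the reader: translate admissibility into word-avoidance in the symbolic expansion $(i_k\ldots i_1 j'_1 j'_2\ldots)$ of $g_{i_k\ldots i_1}(y)$, note it shares its first $k+n$ symbols with the expansion of $g_{i_k\ldots i_1}(x)\in\Lambda_Q$, and observe that a forbidden word of length at most $l(\Sigma_Q)=n+1$ cannot straddle position $k+n$, hence must lie entirely in either the shared prefix (contradicting admissibility for $x$) or in $\pi^{-1}(y)$ (contradicting $y\in\Lambda_Q$). Your remark about the necessity of taking a minimal representation $Q$ is exactly the point of the choice $n=l(\Sigma_Q)-1$.
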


The system $(\Lambda_Q, f,  g_1,\ldots, g_p)$ is called a \emph{Markov system}.
The partition $\Lambda_Q = \bigcup_{k=1}^p J_k$, where $J_k = I_k\cap \Lambda_Q$, has the property that $f(J_k)$ either contains $J_\ell$ or is disjoint with $J_\ell$ for every $k$, $\ell$. 
A partition with this property is called {\it Markov}.
We note that, in the context of dynamical systems theory, one usually considers Markov systems which are topologically conjugate to a subshift of finite type. Wheras here we also want to include the more general case, which in particular will be part of our constructions in Section~\ref{sec:4}.  

From now on we will always assume that $\Sigma_Q$ is completely invariant.

The Markov system needs not, in general, to be transitive.
However, if the system is not transitive then it is a disjoint union of finitely many transitive Markov systems and we can study each of them separately.
In what follows, we consider only transitive Markov systems.

\subsection{Topological pressure}\label{sec:press}

Let $\varphi$ be a continuous function on $\Lambda_Q$. The \emph{topological pressure} of $\varphi$ (with respect to $f|\Lambda_Q$) is defined by
\begin{equation}\label{wurm}
  P(\varphi) \eqdef
  \lim_{n\to\infty}\frac{1}{n} \log \sum_{(i_1\ldots i_n)}
      \exp \max_{x\in\Delta_{i_1\ldots i_n}} S_n\varphi(x),
\end{equation}
where here and in the sequel the sum is taken over the cylinders with non-empty intersection with the set $\Lambda_Q$.
The existence of the limit follows easily from the fact that the sum constitutes a sub-multiplicative sequence. Moreover, the value $P(\varphi)$ does not depend on the particular Markov partition that we use in its definition.  

Denote by $\cM$ the family of $f$-invariant Borel probability measures on $\Lambda_Q$. 
By the variational principle we have
\begin{equation}\label{wirm}
 P(\varphi)= 
      \max_{\mu\in\cM}\left( h_\mu(f)+\int_{\Lambda_Q} \varphi d\mu\right),
\end{equation}
where $h_\mu(f)$ denotes the entropy of $f$ with respect to $\mu$ (see~\cite{Wal:81}).

\begin{lemma}\label{lem1}
	$f$ has tempered distortion on $\Lambda_Q$, that is, there exists a positive sequence $(\rho_n)_n$ decreasing to $0$ such that for every $n$ we have
	\begin{equation}\label{*t35}
  	\sup_{(i_1\ldots i_n)}\sup_{x,y\in\Delta_{i_1\ldots i_n}}
  	\frac{\lvert (f^n)'(x)\rvert}{\lvert(f^n)'(y)\rvert} 
	\le e^{n\rho_n}.
	\end{equation}
\end{lemma}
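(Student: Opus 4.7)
The plan is to combine a telescoping chain-rule argument with the Ces\`aro averaging of the cylinder diameters $d_n$, which tend to $0$ by~\eqref{eq1}.

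First, I would observe that $\log\lvert f'\rvert$ is Lipschitz continuous on each interval $\interior(I_i)$. Since each $g_i\colon \bigcup_j I_j \to \interior(I_i)$ is a bijective $C^{1+\Lip}$ diffeomorphism on a compact set, its derivative $g_i'$ is Lipschitz and, being nowhere zero, is bounded above and away from zero on $\bigcup_j I_j$. The identity $f'(x)=1/g_i'(f(x))$ for $x\in\interior(I_i)$ then implies that $f'$ is Lipschitz on each $\interior(I_i)$ and bounded away from zero, so that $\log\lvert f'\rvert$ is Lipschitz on each $\interior(I_i)$; let $L$ denote a common Lipschitz constant.

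Next, for $x, y \in \Delta_{i_1\ldots i_n}$ and each $0\le k\le n-1$, the iterates $f^k(x), f^k(y)$ both lie in $\Delta_{i_{k+1}\ldots i_n}\subset I_{i_{k+1}}$, so
\[
\lvert f^k(x)-f^k(y)\rvert\le\lvert\Delta_{i_{k+1}\ldots i_n}\rvert\le d_{n-k}.
\]
Applying the chain rule $\log\lvert(f^n)'\rvert=\sum_{k=0}^{n-1}\log\lvert f'\rvert\circ f^k$ together with the Lipschitz bound on $\log\lvert f'\rvert$ on a single $\interior(I_{i_{k+1}})$ then yields
\[
\biggl\lvert\log\frac{\lvert(f^n)'(x)\rvert}{\lvert(f^n)'(y)\rvert}\biggr\rvert\le L\sum_{k=0}^{n-1}\lvert f^k(x)-f^k(y)\rvert\le L\sum_{j=1}^n d_j.
\]

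Finally, I would set $\rho_n:=(L/n)\sum_{j=1}^n d_j$, which immediately gives the required bound~\eqref{*t35}. Since $d_n\to 0$ by~\eqref{eq1}, Ces\`aro averaging yields $\rho_n\to 0$. The only substantive point in the argument is the Lipschitz estimate for $\log\lvert f'\rvert$; in particular, one must use that each $g_i$ is a genuine diffeomorphism (so that $g_i'$ does not vanish) in order to obtain a positive lower bound on $\lvert f'\rvert$ and hence the Lipschitz property of its logarithm. Beyond this, the computation is standard.
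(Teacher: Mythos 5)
Your proof is correct and follows essentially the same approach as the paper's terse argument: bound $\log\dist f^n|_{\Delta_{i_1\ldots i_n}}$ by $L\sum_{j=1}^n d_j$ via Lipschitz continuity of $\log\lvert f'\rvert$ on each branch, then invoke Ces\`aro averaging. One detail you leave implicit is that the lemma requires $(\rho_n)$ to be \emph{decreasing}; your choice does satisfy this, since the nesting of cylinders forces $(d_n)$ to be monotone decreasing, and Ces\`aro means of a decreasing sequence are themselves decreasing (and in any case $\rho_n$ could be replaced by $\sup_{m\ge n}\rho_m$).
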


\begin{proof}
  By Lipschitz continuity of $f|\Lambda_Q$ we have
  \[
   \log\sup_{(i_1\ldots i_n)}\sup_{x,y\in\Delta_{i_1\ldots i_n}}
   \frac{\lvert (f^n)'(x)\rvert}{\lvert(f^n)'(y)\rvert}   \le c\sum_{k=1}^nd_k ,  
  \]
  for some positive number $c$ and for $d_k$ defined in~\eqref{eq1}. By 
  the condition~\eqref{eq1} the sequence $(d_k)_k$ is decreasing to $0$,
  which implies the claimed property.
\end{proof}

Given $t\in\bR$, we define the function $\varphi_t\colon\Lambda_Q\to\bR$ by
\begin{equation}\label{varphit}
        \varphi_t(x) \eqdef
         -t\log\lvert f'(x)\rvert.
\end{equation}
The tempered variation property~\eqref{*t35} ensures in particular that in the definition of $P(\varphi_t)$ in~\eqref{wurm} one can replace the maximum by the minimum or, in fact, by any intermediate value. 
Given $x\in\Lambda_Q$ we denote by $\chi(x)$ the \emph{Lyapunov exponent}
at $x$
\[
\chi(x)\eqdef\limsup_{n\to\infty}\frac{1}{n}\log\lvert (f^n)'(x)\rvert\,.
\] 

\begin{proposition}
  The function $t\mapsto P(\varphi_t)$ is a continuous, convex, and non-increasing function of $\bR$. $P(\varphi_t)$ is negative for large $t$ if and only if there exist no $f$-invariant probability measures with zero Lyapunov exponent.   
\end{proposition}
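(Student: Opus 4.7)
The plan is to derive all four assertions from the variational principle~\eqref{wirm}, which rewrites
\[
P(\varphi_t) = \sup_{\mu\in\cM}\bigl(h_\mu(f) - t\chi_\mu\bigr),\qquad \chi_\mu := \int_{\Lambda_Q}\log\lvert f'\rvert\,d\mu.
\]
Convexity is then immediate, as $P(\varphi_t)$ is a supremum of affine functions of $t$. Continuity (in fact, Lipschitz continuity) follows from the standard estimate $\lvert P(\varphi)-P(\psi)\rvert\le\|\varphi-\psi\|_\infty$ applied with $\|\varphi_t-\varphi_s\|_\infty = \lvert t-s\rvert\cdot\|\log\lvert f'\rvert\|_\infty$; this norm is finite because each $g_i$ is a $C^1$ diffeomorphism on a compact interval, so $\lvert f'\rvert = 1/\lvert g_i'\rvert$ is both bounded above and bounded away from zero on $\Lambda_Q$.

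For the non-increasing property, I would reduce to showing $\chi_\mu \ge 0$ for every $\mu\in\cM$: granted this, for $t\le s$ the inequality $-t\chi_\mu \ge -s\chi_\mu$ holds uniformly in $\mu$, so taking the supremum yields $P(\varphi_t)\ge P(\varphi_s)$. To verify $\chi_\mu\ge 0$ for an ergodic $\mu$, fix a $\mu$-typical $x\in\Lambda_Q$ with symbolic expansion $(i_1 i_2\ldots)$. The identity $f^{n-1}(\Delta_{i_1\ldots i_n}) = I_{i_n}$ together with the mean value theorem produces $\xi_n\in\Delta_{i_1\ldots i_n}$ with $\lvert(f^{n-1})'(\xi_n)\rvert \ge \min_j\lvert I_j\rvert / d_n$, and Lemma~\ref{lem1} transfers this lower bound from $\xi_n$ to $x$ at the cost of a factor $e^{(n-1)\rho_{n-1}}$. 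Taking $\tfrac{1}{n}\log$ and using $d_n\to 0$ from~\eqref{eq1} and $\rho_n\to 0$, Birkhoff's ergodic theorem yields $\chi_\mu\ge 0$.

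For the sign assertion, I would use weak-$\ast$ compactness of $\cM$ combined with continuity of $\mu\mapsto\chi_\mu$ (which follows from continuity of the bounded function $\log\lvert f'\rvert$) to conclude that $c:=\inf_{\mu\in\cM}\chi_\mu$ is attained. If $c>0$, then $P(\varphi_t)\le h_{\mathrm{top}}(f) - tc$ is strictly negative once $t > h_{\mathrm{top}}(f)/c$. If $c=0$, some $\mu_0\in\cM$ satisfies $\chi_{\mu_0}=0$; by the one-dimensional Ruelle inequality $h_{\mu_0}(f)\le \chi_{\mu_0}^+ = 0$, so $P(\varphi_t)\ge h_{\mu_0}(f)-t\cdot 0 = 0$ for every $t$. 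The most delicate step throughout is the non-negativity of Lyapunov exponents, where one must combine the geometric input $d_n\to 0$ with the tempered distortion of Lemma~\ref{lem1}; the rest of the argument is a direct consequence of the variational principle together with standard compactness.
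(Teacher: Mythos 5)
Your proof is correct and executes exactly the strategy the paper gestures at: all four assertions are derived from the variational principle~\eqref{wirm}, supplemented by the Lipschitz bound $\lvert P(\varphi)-P(\psi)\rvert\le\lVert\varphi-\psi\rVert_\infty$, the non-negativity of Lyapunov exponents along $\Lambda_Q$ (which the paper isolates as Lemma~\ref{genull}, stated there without proof and relying on the same combination of the generator condition~\eqref{eq1} with the tempered distortion of Lemma~\ref{lem1} that you use), and weak-$\ast$ compactness of $\cM$. Two small remarks. First, the appeal to Ruelle's inequality in the $c=0$ branch is a detour: once $\chi_{\mu_0}=0$, the bound $P(\varphi_t)\ge h_{\mu_0}(f)-t\cdot 0 = h_{\mu_0}(f)\ge 0$ already gives $P(\varphi_t)\ge 0$ for all $t$, using only that entropy is non-negative; you do not need to identify $h_{\mu_0}(f)$ as exactly zero. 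Second, you verify $\chi_\mu\ge 0$ only for ergodic $\mu$ via Birkhoff but then use it for general $\mu\in\cM$ in the monotonicity step; this is harmless but should be explicitly closed by the ergodic decomposition $\chi_\mu=\int\chi_{\mu_e}\,d\tau(\mu_e)$.
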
  

\begin{proof}
  The claimed properties follow immediately from general facts about the
  pressure together with the variational principle~\eqref{wirm}. 
\end{proof}

We define
\[
t_0
\eqdef\inf\{t>0\colon P(\varphi_t)\le 0\}.
\]

\begin{proposition}\label{lem:perpress}
  Suppose that $f|\Lambda_Q$ is transitive, then for every $t$
  \begin{equation}\label{hig}
  P(\varphi_t) = 
  \lim_{n\to\infty}\frac{1}{n}\log 
  \sum_{\tiny
    \begin{matrix}f^n(x)=x,\\
    x\in \Lambda_Q
    \end{matrix}} \lvert(f^n)'(x)\rvert^{-t}.
  \end{equation}
\end{proposition}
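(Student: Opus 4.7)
The plan is to establish the two bounds $\limsup_{n\to\infty}\frac{1}{n}\log P_n(t)\le P(\varphi_t)$ and $\liminf_{n\to\infty}\frac{1}{n}\log P_n(t)\ge P(\varphi_t)$ for the periodic-orbit sum $P_n(t)\eqdef \sum_{f^n(x)=x,\,x\in\Lambda_Q}\lvert(f^n)'(x)\rvert^{-t}$, and then to upgrade subsequential convergence to convergence of the full sequence. For the upper bound, expansivity together with the generating property~\eqref{eq1} forces each cylinder $\Delta_{i_1\ldots i_n}$ to contain at most one periodic point of period $n$, and Lemma~\ref{lem1} applied to $\varphi_t$ gives, at any such periodic point $x$, the estimate $\lvert(f^n)'(x)\rvert^{-t}\le e^{n\lvert t\rvert\rho_n}\exp\max_{y\in\Delta_{i_1\ldots i_n}}S_n\varphi_t(y)$. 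Summing over the admissible cylinders and comparing with the defining formula~\eqref{wurm} yields the $\le$ bound once $n\to\infty$ and $\rho_n\to 0$.

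For the reverse bound I would use transitivity of $f|\Lambda_Q$ to obtain a uniform integer $M\ge 1$ such that every admissible $n$-cylinder $(i_1\ldots i_n)$ meeting $\Lambda_Q$ admits a suffix of length $\ell\le M$ turning $(i_1\ldots i_n)$ into a periodic admissible word of period $n+\ell$ in $\Sigma_Q$. By complete invariance this periodic word projects under $\pi$ to a periodic point of $f$ inside $\Delta_{i_1\ldots i_n}\cap\Lambda_Q$. Controlling the $\ell$ extra iterates by the uniform upper bound on $\lvert f'\rvert$ and the main block by tempered distortion gives
\[
\sum_{\ell=1}^{M} P_{n+\ell}(t)\ge c_M\,e^{-n\lvert t\rvert\rho_n}\sum_{(i_1\ldots i_n)}\exp\min_{y\in\Delta_{i_1\ldots i_n}}S_n\varphi_t(y),
\]
whose right-hand side behaves as $e^{nP(\varphi_t)+o(n)}$ by~\eqref{*t35} and~\eqref{wurm}. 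Pigeonholing one $\ell_n\in\{1,\ldots,M\}$ then yields $P(\varphi_t)$ as a subsequential limit of $\frac{1}{N}\log P_N(t)$.

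To upgrade to full convergence I would apply the same closing-up trick twice, splicing two admissible periodic orbits of periods $n_1,n_2$ into a single admissible periodic orbit of period $n_1+n_2+\ell'$ with $\ell'\le 2M$; tempered distortion together with uniform bounds on $\lvert f'\rvert$ then yields the almost-submultiplicativity $P_{n_1+n_2+\ell'}(t)\ge c\,P_{n_1}(t)P_{n_2}(t)$ uniformly in $n_1,n_2$, and a Fekete-type argument produces the limit, which by the two bounds above must equal $P(\varphi_t)$. The chief obstacle is the absence of uniform expansion: the classical Bowen--Ruelle proof for subshifts of finite type with H\"older potentials does not apply directly. Lemma~\ref{lem1} is used precisely to replace the usual bounded-distortion constant by the subexponential factor $e^{n\lvert t\rvert\rho_n}$, which is exactly what is needed for both the upper and lower estimates to survive after the $n^{-1}\log$ normalisation.
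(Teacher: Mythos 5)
Your proposal is correct and rests on the same two pillars as the paper's proof: tempered distortion (Lemma~\ref{lem1}) to pass between periodic-point sums and cylinder sums, and transitivity to manufacture a periodic point inside each admissible cylinder. The upper bound is identical to the paper's (inequality~\eqref{iiilda}). Where you diverge is in the mechanics of the lower bound. The paper fixes once and for all a time $s$ and the family $\Sigma_s^0$ of $s$-cylinders mapping \emph{onto} $\Lambda_Q$ in exactly $s$ steps; concatenating any admissible $k$-cylinder with a suitable member of $\Sigma_s^0$ produces a member of $\Sigma_{k+s}^0$, which contains a period-$(k+s)$ point. Because the closing time $s$ is \emph{constant}, this gives a lower bound on $P_{k+s}(t)$ for every $k$, hence full convergence of $\frac{1}{n}\log P_n(t)$ with no extra step. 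Your version uses a closing lemma in which the closing length $\ell$ varies over $\{1,\dots,M\}$, so you only control $\sum_{\ell\le M}P_{n+\ell}(t)$ and must recover the limit by a Fekete-type argument. That step is sound but needs care: when you splice two periodic orbits of periods $n_1,n_2$, the connecting length $\ell'=\ell_1+\ell_2$ again depends on the pair, so what you actually obtain is $\sum_{\ell'\le 2M}P_{n_1+n_2+\ell'}(t)\ge c\,P_{n_1}(t)P_{n_2}(t)$ rather than a clean single-index inequality; you also must verify that the assignment (pair of periodic points, $n_1$, $\ell_1$, $n_2$, $\ell_2$) $\to$ spliced periodic point is injective so the product is not overcounted — this does hold because the coding of the spliced point determines its two constituent periodic blocks once the lengths are fixed. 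So both proofs work; the paper's fixed-$s$ device simply sidesteps the subadditivity machinery that your variable-$\ell$ closing forces you to invoke.
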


\begin{proof}
By transitivity, for each $I_j$, $j=1$, $\ldots$, $p$, there exists a
cylinder $\Delta(j)\eqdef\Delta_{i_1\ldots i_{s_j}}\subset I_j$ such that
$f^{s_j}(\Delta_{i_1\ldots i_{s_j}} \cap \Lambda_Q)=\Lambda_Q$.  Put $s\eqdef 
\max\{s_j\colon 1\le j \le p\}$. For every $n\ge 1$ denote 
\[
\Sigma_n^0\eqdef
\{(i_1\ldots i_n)\colon f^n(\Delta_{i_1\ldots i_n}\cap \Lambda_Q)=\Lambda_Q \}.
\]
Recall the choice of the sequence $(\rho_n)_n$ given in~\eqref{*t35}.
Notice that by the tempered distortion property, for any $n\ge 1$ we have
\begin{eqnarray}
\sum_{\tiny
    \begin{matrix}f^n(x)=x,\\
    x\in \Lambda_Q
    \end{matrix}}\lvert (f^n)'(x)\rvert^{-t}
&\ge &\sum_{(i_1\ldots i_n)\in\Sigma_n^0}
     \min_{x\in\Delta_{i_1\ldots i_n}}\lvert (f^n)'(x)\rvert^{-t} \notag\\
&\ge &e^{-nt\rho_n}
     \sum_{(i_1\ldots i_n)\in\Sigma_n^0}
    \lvert (f^n)'(x_{i_1\ldots i_n})\rvert^{-t} ,\label{tiilda}
\end{eqnarray}
where $x_{i_1\ldots i_n}\in\Delta_{i_1\ldots i_n}$ is arbitrarily chosen. 

Given $k>1$, we have
\[\begin{split}
  &\sum_{(i_1\ldots i_k j_1\ldots j_s) \in\Sigma_{k+s}^0}
  \lvert (f^{k+s})'(x_{i_1\ldots i_k j_1\ldots j_s})\rvert^{-t}\\
  &=\sum_{(i_1\ldots i_k)}
    \sum_{\tiny
    \begin{matrix}
      (j_1\ldots j_s) \in \Sigma_s^0,\\
      f^k(\Delta_{i_1\ldots i_k}) \supset \Delta_{j_1\ldots j_s}
    \end{matrix}}
	\lvert (f^{k+s})'(x_{i_1\ldots i_k j_1\ldots j_s})\rvert^{-t} \\
& \ge\sum_{(i_1\ldots i_k)}
	 \sum_{\tiny
    \begin{matrix}
      (j_1\ldots j_s) \in \Sigma_s^0,\\
      f^k(\Delta_{i_1\ldots i_k}) \supset \Delta_{j_1\ldots j_s}
    \end{matrix}}
    \min_{x\in\Delta_{i_1\ldots i_k}}\lvert (f^k)'(x)\rvert^{-t}\,
	\lvert (f^s)'(f^k(x_{i_1\ldots i_kj_1\ldots j_s}))\rvert^{-t}\\
& \ge  C \sum_{(i_1\ldots i_k)} 
\min_{x\in\Delta_{i_1\ldots i_k}}\lvert (f^k)'(x)\rvert^{-t}\\
& \ge C e^{-kt\rho_k}\sum_{(i_1\ldots i_k)} 
\max_{x\in\Delta_{i_1\ldots i_k}}\lvert (f^k)'(x)\rvert^{-t}
\end{split}\]
(remember that each sum is taken over the cylinders which intersect $\Lambda_Q$),
where $C$ is some positive constant depending entirely only on $f^s$.  

Since every $\Delta_{i_1\ldots i_n}$ contains at most one periodic point of period $n$, we have
\begin{equation}\label{iiilda}
\sum_{\tiny\begin{matrix}f^n(x)=x,\\x\in\Lambda_Q\end{matrix}} \lvert (f^n)'(x)\rvert^{-t}
\le \sum_{(i_1\ldots i_n)}
    \max_{x\in\Delta_{i_1\ldots i_n}}\lvert (f^n)'(x)\rvert^{-t}.
\end{equation}
From here the claimed property follows.
\end{proof}

We remark that the above proof used some ideas from~\cite[Section 5]{Yur:98}.

\subsection{Conformal measures}\label{sec:confmeas}

Denote by $\cL_\varphi \psi(x) \eqdef \sum_{f(y)=x, y\in \Lambda_Q}e^\varphi(y)\psi(y)$, $x\in\Lambda_Q$, the \emph{Ruelle-Perron-Frobenius transfer operator} associated to $\varphi$, which is acting on  the space  of continuous functions $\psi\colon \Lambda_Q\to\bR$.
By the Schauder-Tychonov theorem, there exists a fixed point of the map $\nu\mapsto (\cL_\varphi^\ast\nu(1))^{-1}\cL_\varphi^\ast\nu$. This fixed point, say $\nu$, is a conformal measure with respect to the function $\exp(P(\varphi)-\varphi)$, that is, it satisfies
\[
\nu(f(A)) = \int_A e^{P(\varphi)-\varphi(x)} d\nu(x).
\]
for every Borel subset $A$ of $\Lambda_Q$ such that $f|_A$ in
injective (see~\cite{DenUrb:91} for more details on conformal measures) . 


In the particular case that in the definition of the transfer operator we
consider the potential $\varphi_t$ defined in~\eqref{varphit} with $t$
such that $P(\varphi_t)=0$, such a measure $\nu_t$ satisfies
\begin{equation}\label{conf}
\nu_t(f(A)) = \int_A \lvert f'(x)\rvert^t d\nu_t(x).
\end{equation}
A measure satisfying~\eqref{conf} is called \emph{$t$-conformal} for
$f|\Lambda_Q$.  
The following lemma follows immediately from the definition, the proof will be left to the reader.

\begin{lemma} \label{nonzero}
If $f|\Lambda_Q$ is transitive, the $t$-conformal measure is strictly positive on every cylinder.
\end{lemma}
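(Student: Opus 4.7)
The plan is to combine transitivity with the conformality of $\nu_t$ to propagate positive $\nu_t$-measure from one cylinder to every cylinder intersecting $\Lambda_Q$.

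Since $\nu_t(\Lambda_Q)=1$ and each cylinder of length $n$ is a disjoint union of finitely many sub-cylinders of length $n+1$, at least one cylinder of every length has positive $\nu_t$-measure. Fix $n\ge l(\Sigma_Q)-1$ and such a cylinder $\Delta^{\ast}=\Delta_{i_1\ldots i_n}$ with $\nu_t(\Delta^{\ast})>0$.

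Now let $\Delta=\Delta_{j_1\ldots j_m}$ with $\Delta\cap\Lambda_Q\ne\emptyset$ be arbitrary. By transitivity of $\Sigma_Q$ we find a bridge word $(\ell_1\ldots\ell_k)$ of length $k\ge l(\Sigma_Q)-1$ such that the concatenation $(j_1\ldots j_m\,\ell_1\ldots\ell_k\,i_1\ldots i_n)$ is admissible in $\Sigma_Q$. Let $\Delta'\subset\Delta$ be the associated sub-cylinder. Then $f^{m+k}$ maps $\Delta'$ bijectively onto $\Delta^{\ast}$ as subsets of $I$. Moreover, because both the bridge and $\Delta^{\ast}$ have length at least $l(\Sigma_Q)-1$, every length-$l(\Sigma_Q)$ sub-word that could span either junction in the concatenated sequence $(j_1\ldots\ell_k\,i_1\ldots i_n\,i_{n+1}\ldots)$, with $(i_1\ldots i_n\,i_{n+1}\ldots)\in\Sigma_Q$, already lies inside an admissible block of the concatenation, so the concatenation itself lies in $\Sigma_Q$. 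Hence the restriction $f^{m+k}\colon \Delta'\cap\Lambda_Q\to\Delta^{\ast}\cap\Lambda_Q$ is surjective.

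The conclusion now follows from the iterated conformality relation~\eqref{conf} applied to $A=\Delta'\cap\Lambda_Q$:
\[
\nu_t(\Delta^{\ast})=\nu_t\bigl(f^{m+k}(A)\bigr)=\int_A\lvert(f^{m+k})'\rvert^t\,d\nu_t\le M\cdot\nu_t(\Delta'),
\]
where $M<\infty$ is an upper bound for $\lvert(f^{m+k})'\rvert^t$ on the compact cylinder $\Delta'$, finite because each $g_i$ is a $C^{1+\Lip}$ diffeomorphism. We conclude $\nu_t(\Delta)\ge\nu_t(\Delta')>0$. The main technical point is the surjectivity of $f^{m+k}$ from $\Delta'\cap\Lambda_Q$ onto $\Delta^{\ast}\cap\Lambda_Q$, where the subshift-of-finite-type structure of $\Sigma_Q$ and the chosen bridge length both enter; the rest is straightforward book-keeping with~\eqref{conf}.
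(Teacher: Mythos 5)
Your proof is correct; the paper explicitly leaves this lemma to the reader, and yours is the natural argument: use transitivity to build an admissible bridge from an arbitrary cylinder $\Delta$ into a fixed positive-measure cylinder $\Delta^{\ast}$ of length $\ge l(\Sigma_Q)-1$, verify via the finite-memory Markov structure that $f^{m+k}$ maps $\Delta'\cap\Lambda_Q$ onto $\Delta^{\ast}\cap\Lambda_Q$, and pull positive measure back through the iterated conformality relation together with the boundedness of $\lvert(f^{m+k})'\rvert$ on the compact cylinder $\Delta'$. (A minor remark: the requirement $k\ge l(\Sigma_Q)-1$ on the bridge length is superfluous -- only $n\ge l(\Sigma_Q)-1$ is needed to handle the junction with the infinite tail -- but imposing it does no harm.)
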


We denote by 
\[
t_c\eqdef\inf\{t\ge0 \colon t\text{-conformal measure exists on } \Lambda_Q\}.
\]

\section{Points with infinitively many hyperbolic instants}\label{sec:hyptime}

We will \emph{a priori} not require that $f|\Lambda_Q$ is expanding neither that $f|\Lambda_Q$ has bounded distortion. However, in the following, we derive that a rather large set of points has already good expanding and distortion properties. 

Let us denote by 
\[
\dist g|_Z \eqdef \sup_{x,y\in Z}\frac{\lvert g'(x)\rvert}{\lvert g'(y)\rvert} 
\]
the maximal distortion of a map $g$ on a set $Z$.
We denote by $H$ the set of points $x$ in $\Lambda_Q$ which have the property
that there exist positive constants $c_1$, $c_2$, a monotonously increasing
sequence of natural numbers $(n_k)_k$ and a positive sequence $(r_k)_k$
decreasing to $0$ (all, in general, depending on $x$) such that for every
$k\ge 1$ the map $f^{n_k}$ is defined on $B(x,r_k)$ and we have 
\[
\diam f^{n_k}(B(x,r_k)) >c_1 \text{ and }
\dist f^{n_k}|_{B(x,r_k)}<c_2.
\]  
We call the numbers $n_k$ \emph{hyperbolic instants} for $x$. We remark the following statements of this section remain true if we consider $\Lambda$ instead of $\Lambda_Q$, however, we keep our restricted point of view.

We want to relate the above concept to the concept of hyperbolic times, which requires slightly stronger expansion properties of a given orbit. A number $n\in\bN$ is called a \emph{hyperbolic time} for a point $x\in \Lambda_Q$ with exponent $\alpha$ if 
\[
\lvert (f^k)'(f^{n-k}(x))\rvert 
\ge e^{k\alpha} \text{ for every }1\le k \le n.
\]
We denote by $H_{\rm HT}(\alpha)$ the set of points $x\in\Lambda_Q$ for which there exist infinitely many hyperbolic times with exponent $\alpha$. Obviously, $H_{\rm HT}(\alpha)\subset  H_{\rm HT}(\alpha')$ if $\alpha'<\alpha$.

The following is an immediate consequence of the generator condition~\eqref{eq1} and Lemma~\ref{lem1}.

\begin{lemma}\label{genull}
	We have $\chi(x)\ge 0$ for every $x\in\Lambda_Q$.	
\end{lemma}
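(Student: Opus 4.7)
My plan is to exploit the geometric identity that each cylinder $\Delta_{i_1\ldots i_n}$ is mapped by $f^{n-1}$ onto the full interval $I_{i_n}$, combined with the fact that $|I_{i_n}|$ is uniformly bounded below while $|\Delta_{i_1\ldots i_n}|\le d_n\to 0$ by~\eqref{eq1}. This forced stretching, plus the tempered distortion bound from Lemma~\ref{lem1}, will yield a lower bound on $|(f^{n-1})'(x)|$ that suffices for $\chi(x)\ge 0$.

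Concretely, I would fix $x\in\Lambda_Q$ with symbolic expansion $(i_1i_2\ldots)$ and set $c\eqdef\min_{1\le j\le p}|I_j|>0$. Since $f\circ g_i=\mathrm{id}$ on $\bigcup_j I_j$, iterating gives
\[
f^{n-1}(\Delta_{i_1\ldots i_n}) = f^{n-1}\bigl(g_{i_1\ldots i_{n-1}}(I_{i_n})\bigr) = I_{i_n},
\]
so $|f^{n-1}(\Delta_n(x))|\ge c$. The mean value theorem then supplies $\xi_n\in\Delta_n(x)$ with
\[
|(f^{n-1})'(\xi_n)| \;=\; \frac{|I_{i_n}|}{|\Delta_n(x)|} \;\ge\; \frac{c}{d_n}.
\]
Applying Lemma~\ref{lem1} at level $n-1$ on the cylinder $\Delta_{i_1\ldots i_{n-1}}\supset \Delta_n(x)$ transfers this lower bound from $\xi_n$ to $x$:
\[
|(f^{n-1})'(x)| \;\ge\; c\,e^{-(n-1)\rho_{n-1}}\,d_n^{-1}.
\]

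Taking logarithms, dividing by $n-1$, and letting $n\to\infty$ finishes the argument: $\rho_{n-1}\to 0$ by Lemma~\ref{lem1}, $(\log c)/(n-1)\to 0$, and $-(\log d_n)/(n-1)\ge 0$ once $d_n\le 1$, which holds for all large $n$ by~\eqref{eq1}. Thus $\liminf_n \tfrac{1}{n-1}\log|(f^{n-1})'(x)|\ge 0$, so a fortiori $\chi(x)=\limsup_n\tfrac{1}{n}\log|(f^n)'(x)|\ge 0$. I see no real obstacle: the statement is essentially a packaging of the generator condition together with the distortion bound just established, so the hardest ``step'' is simply noticing that the stretching factor $|I_{i_n}|/|\Delta_n(x)|$ is forced to grow at least like $c/d_n$.
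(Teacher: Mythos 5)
Your proof is correct and is precisely the argument the authors intended: the paper simply asserts that the lemma is ``an immediate consequence of the generator condition~\eqref{eq1} and Lemma~\ref{lem1},'' and your chain of the mean value theorem on $f^{n-1}\colon\Delta_n(x)\to I_{i_n}$ followed by the tempered distortion bound to transfer the estimate from $\xi_n$ to $x$ is the natural spelling-out of that claim. Nothing is missing.
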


We now show some immediate properties of the set $H$. 

\begin{proposition}\label{prop:dist}
  We have 
  \[
  \left\{x\in\Lambda_Q\colon \chi(x)>0 \right\} = \bigcup_{\alpha>0} H_{\rm HT}(\alpha)
  \subset H.
  \]
  A periodic point $x$ or its preimage is in $H$ if and only if $\chi(x)>0$. 
\end{proposition}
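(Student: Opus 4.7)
The proposition bundles three assertions: the equality $\{x:\chi(x)>0\}=\bigcup_{\alpha>0} H_{\rm HT}(\alpha)$, the inclusion $\bigcup_{\alpha>0} H_{\rm HT}(\alpha)\subset H$, and, for periodic or preperiodic $x$, the reverse implication $x\in H \Rightarrow \chi(x)>0$. I plan to handle them in this order. The first equality is essentially Pliss' lemma: writing $a_j\eqdef\log|f'(f^{j-1}(x))|$, the condition that $n$ is a hyperbolic time with exponent $\alpha$ reads $\sum_{j=m+1}^n a_j \geq (n-m)\alpha$ for all $0\le m<n$. Given $\chi(x)>0$ and any $0<\alpha<\chi(x)$, infinitely many such $n$ exist by Pliss' lemma applied to $(a_j)_j$; this yields $\{\chi>0\}\subset\bigcup_\alpha H_{\rm HT}(\alpha)$. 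The reverse inclusion is immediate from the definition of $\chi$.

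The inclusion $H_{\rm HT}(\alpha)\subset H$ is where the $C^{1+\Lip}$ regularity enters, via an A.~Schwartz-type distortion bound. I fix $\delta_0>0$ smaller than the minimal gap between the intervals $I_i$, so that for any $y\in\Lambda_Q$ the inverse branch of $f^n$ sending $f^n(y)$ back to $y$ is single-valued on $B(f^n(y),\delta_0)$. Given a hyperbolic time $n$ for $x$ with exponent $\alpha$, I bootstrap a distortion bound on the pullback $V_n\ni x$ of $B(f^n(x),\delta_0)$: for $y\in V_n$, the hyperbolic-time contraction combined with the evolving distortion bound yields $|f^{n-k}(y)-f^{n-k}(x)| \le C \delta_0\, e^{-k\alpha}$, and the telescoped Lipschitz estimate
\[
\bigl|\log|(f^n)'(y)| - \log|(f^n)'(x)|\bigr|
\le L \sum_{k=0}^{n-1}|f^k(y)-f^k(x)|
\le \frac{LC\delta_0}{1-e^{-\alpha}}
\]
gives a uniform distortion constant $c_2$. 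Setting $r_n\eqdef \delta_0/(c_2|(f^n)'(x)|)$, one has $B(x,r_n)\subset V_n$ and its image under $f^n$ has diameter at least $c_1\eqdef 2\delta_0/c_2^2$. This is the main technical step: the summability of the Lipschitz errors hinges jointly on the geometric shrinkage guaranteed at hyperbolic times and on $f'$ being Lipschitz, and it is exactly here that $C^{1+\Lip}$ (rather than merely $C^1$) is used.

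For the last assertion I argue by contradiction. If $x\in H$ with hyperbolic instants $n_k$ and radii $r_k\to 0$, then the bounded distortion and the lower bound on the image diameter force
\[
|(f^{n_k})'(x)| \ge \frac{c_1}{2c_2\, r_k} \xrightarrow[k\to\infty]{} \infty.
\]
On the other hand, if $x$ is periodic of period $p$ with $\chi(x)=0$ then $|(f^p)'(x)|=1$, so $n\mapsto |(f^n)'(x)|$ is bounded; the same holds for preperiodic $x$ via the factorisation $f^n=f^{n-m}\circ f^m$, where $f^m(x)$ is periodic. Together with Lemma~\ref{genull}, this rules out $\chi(x)=0$ and yields $\chi(x)>0$. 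The converse direction $\chi(x)>0\Rightarrow x\in H$ for (pre)periodic points is already covered by the first two parts, since the Lyapunov exponent of a preperiodic point coincides with that of its eventual periodic orbit.
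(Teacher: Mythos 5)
Your proof follows the paper's own argument in all three pieces: the Pliss-type characterisation $\{\chi>0\}=\bigcup_{\alpha>0}H_{\rm HT}(\alpha)$, the bootstrap/A.~Schwartz distortion bound giving $H_{\rm HT}(\alpha)\subset H$, and the boundedness-of-derivative contradiction for (pre)periodic points. The only substantive difference is that you cite Pliss' lemma directly, whereas the paper gives a short self-contained version of that counting argument (while noting that Pliss suffices). One small imprecision worth flagging: in the contraction estimate you write $\lvert f^{n-k}(y)-f^{n-k}(x)\rvert\le C\delta_0 e^{-k\alpha}$ with a uniform $C$, but the factor $C$ cannot be taken independent of $k$ before the distortion bound is established --- this is exactly why the paper sacrifices half the exponent, proving the weaker rate $e^{-k\alpha/2}$ by induction with the one-step distortion absorbed into the exponent loss. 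Your phrase ``combined with the evolving distortion bound'' shows you are aware of the bootstrap, but the displayed inequality should read $e^{-k\alpha/2}$ (or $e^{-k\alpha'}$ for some fixed $0<\alpha'<\alpha$) to be literally correct; the subsequent geometric summability is unaffected.
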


\begin{remark}
    Under the prerequisites of Urba\'nski in~\cite{Urb:96} every point
    in $\Lambda$ except parabolic fixed points and their preimages belong to
    $H$. 
    Thus, $H$ can contain points with zero Lyapunov exponent. 
\end{remark}

\begin{remark}
	It follows from the proof of Proposition~\ref{prop:dist} that the constants $c_1$ and $c_2$ can, in fact, be chosen uniformly on any of the sets $\{x\in H\colon \chi(x)\ge\chi_0>0\}$.
\end{remark}

\begin{proof}[Proof of Proposition~\ref{prop:dist}]
 Since $f$ locally extends to a $C^{1+\Lip}$ diffeomorphism, for a given $\alpha>0$ there exists $c_1>0$ such that  
  \begin{equation}\label{huhu}
    \sup_{y,z\in I_i}\max_{\lvert f(y)-f(z)\rvert\le c_1}
    \frac{\lvert f'(y)\rvert}{\lvert f'(z)\rvert} 
    \le e^{\alpha/2}.
  \end{equation}
We choose $c_1$ so small that two points from $\Lambda_Q$ in distance smaller than $c_1$ must belong to the same $I_i$.

  Let $n$ be a hyperbolic time for $x\in\Lambda_Q$ with exponent $\alpha>0$.
  We now prove by induction that  for every $y$ such that $f^n(y)\in B(f^n(x),c_1)$ 
  for every $0\le k\le n$ we have
  \begin{equation}\label{huhuhu}
  \lvert f^{n-k}(x)-f^{n-k}(y)\rvert e^{k\alpha/2} \le \lvert f^n(x)-f^n(y)\rvert.
  \end{equation}
  For $y$,~\eqref{huhuhu} is true for $k=0$. Assume now that we have this property for each $k=0$, $\ldots$, $\ell$, and hence that 
  \[
  \lvert f^{n-\ell}(x)-f^{n-\ell}(y)\rvert\le \lvert f^n(x)-f^n(y)\rvert e^{-\ell\alpha/2}
  <c_1e^{-\ell\alpha/2}<c_1.
  \]
   With~\eqref{huhu} and the fact that $\lvert (f^{\ell+1})'(f^{n-\ell-1}(x))\rvert\ge e^{(\ell+1)\alpha}$ this implies that $\lvert f^{n-\ell-1}(x)-f^{n-\ell-1}(y)\rvert \le \lvert f^n(x)-f^n(y)\rvert e^{-(\ell+1)\alpha/2}$. This shows~\eqref{huhuhu}. 
  
    In particular,~\eqref{huhuhu} with $k=n$ implies the first claimed property since by setting $r(n)\eqdef c_1e^{-n\alpha/2}$ we have
  \[
  c_1\le \diam f^n B(x,c_1r(n)).
  \]
  
  We now prove the bounded distortion property. By~\eqref{huhuhu} and
  by Lipschitz continuity of $\log\lvert f'\rvert$ with some Lipschitz constant $c>0$ we have
  \[\begin{split}
  \log\dist f^n|_{B(x,r(n))}
  &= \sup_{y,z\in B(x,r(n))} \log
    \frac{\lvert f'(y)\rvert\lvert f'(f(y))\rvert\cdots\lvert
  f'(f^{n-1}(y))\rvert} 
         {\lvert f'(z)\rvert\lvert f'(f(z))\rvert\cdots\lvert
  f'(f^{n-1}(z))\rvert} \\
  &\le \sum_{k=0}^{n-1} ce^{-(n-k)\alpha/2} \lvert I\rvert
   \le \sum_{k=0}^\infty ce^{-k\alpha/2} \lvert I\rvert\eqdef c_2 
   <\infty
  \end{split}\]
  which proves the inclusion $ H_{\rm HT}(\alpha)\subset H$.

To show that $\bigcup_{\alpha>0} H_{\rm HT}(\alpha)$ coincides with the set of points with positive Lyapunov exponent, one can apply an auxiliary result due to Pliss (see, for example,~\cite{Pli:72}). 
However, we prefer to give a direct proof. 
Let  $x\in\Lambda_Q$ be a point with positive Lyapunov exponent.  
Let us assume that $x\notin  H_{\rm HT}(\chi_0)$ for some  $0<\chi_0<\chi(x)$, in other words, that it has at most finitely many hyperbolic times $n_i$ with exponent $\chi_0$. 
Let $N\eqdef\max_in_i$. 
Let $n>N$. 
Since $n$ is not a hyperbolic time, there exists $k_1<n$ such that 
\[
\lvert (f^{n-k_1})'(f^{k_1}(x))\rvert \le e^{(n-k_1)\chi_0}.
\]  
If $k_1>N$, we can continue and find numbers $k_\ell\le N< k_{\ell-1}<\cdots <k_1<n$ such that
\[
\lvert (f^{k_{i-1}-k_i})'(f^{k_i}(x))\rvert \le e^{(k_{i-1}-k_i)\chi_0}, \quad i=2,\ldots,\ell.
\]  
Hence, we have
\[
\lvert (f^{n-k_i})'(f^{k_i}(x))\rvert\le e^{(n-k_i)\chi_0}.
\]
But this implies that there exists some positive constant $C=C(N,\chi_0)$ such that
\[
\lvert (f^n)'(x)\rvert \le Ce^{n\chi_0} \text{ for every }n>N
\]
and hence $\chi(x)\le\chi_0$, which is a contradiction. Thus, we can conclude that $x$ has infinitely many hyperbolic times with any exponent smaller than $\chi(x)$. 
  
It follows obviously from the first part of this proof that a preimage of a periodic point $x$ is in $H$ if $\chi(x)>0$.
We finally prove that if a periodic point $x$ or its preimage is in $H$ then we must have  $\chi(x)>0$.  Let $x=f^k(x)$ be a point satisfying $\lvert (f^k)'(x)\rvert=1$. Let $y= g_{i_n\ldots i_1}(x)$ be in $H$ and let  $c_1$ and $c_2$ be associated constants and $(n_k)_k$ and $(r_k)_k$ the associated sequences. Then
  \[
  \diam  B(y,r_k) \ge \lvert (f^{n_k})'(y) \rvert^{-1} c_2^{-1}c_1 =
  \lvert (f^n)'(y)\rvert^{-1}  \lvert (f^{n_k-n})'(x)\rvert^{-1} c_2^{-1}c_1 
  \]
  is uniformly bounded from below by some positive constant. In particular this means that $\lvert \Delta_k(y)\rvert$ stays bounded away from zero for
all $k\in \bN$ (a contradiction with \eqref{eq1}).
This finishes the proof of Proposition~\ref{prop:dist}.
\end{proof}

\begin{proposition}\label{prop:dimH}
  We have $\dim_{\rm H} H\le t_c$. 
\end{proposition}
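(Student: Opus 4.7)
The plan is to fix $t>t_c$, use the $t$-conformal measure $\nu_t$ granted by the definition of $t_c$, and show $\dim_{\rm H}H\le t$; letting $t\downarrow t_c$ then gives the desired bound. Since the constants $c_1,c_2$ in the definition of $H$ depend on the point, I would write $H=\bigcup_{N\ge1}H_N$ with $H_N\eqdef\{x\in H\colon\text{the definition holds with } c_1\ge 1/N,\ c_2\le N\}$ and prove $\dim_{\rm H}H_N\le t$ for each $N$ separately; since $H$ is a countable union of such sets, that suffices.

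Fix $N$ and $x\in H_N$ with associated sequences $(n_k),(r_k)$. First I would observe that the domain of $f^{n_k}$ is a disjoint union of open subsets of level-$n_k$ cylinders, so the connected ball $B(x,r_k)$ is contained in the cylinder $\Delta_{n_k}(x)$ and $f^{n_k}$ is a diffeomorphism there. Setting $A_k\eqdef B(x,r_k)\cap\Lambda_Q$, the $t$-conformality~\eqref{conf} yields
\[
\nu_t\bigl(f^{n_k}(A_k)\bigr)=\int_{A_k}\lvert(f^{n_k})'(y)\rvert^{t}\,d\nu_t(y).
\]
The bounded distortion $\dist f^{n_k}|_{B(x,r_k)}<N$, together with $1/N\le c_1<\diam f^{n_k}(B(x,r_k))\le\lvert I\rvert$, forces $\lvert(f^{n_k})'(y)\rvert\asymp r_k^{-1}$ on $B(x,r_k)$ with constants depending only on $N$ and $\lvert I\rvert$, and hence
\[
\nu_t\bigl(f^{n_k}(A_k)\bigr)\le C_N\, r_k^{-t}\,\nu_t(A_k)
\]
for some $C_N>0$ independent of $x$ and $k$.

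The hard part will be establishing a uniform positive lower bound $\nu_t(f^{n_k}(A_k))\ge c_0(N)$. By injectivity of $f^{n_k}$ on $B(x,r_k)$ and complete invariance $f(\Lambda_Q)=\Lambda_Q$, the image $f^{n_k}(A_k)$ consists of those $z\in\Lambda_Q\cap f^{n_k}(B(x,r_k))$ whose symbolic expansion admissibly continues the prefix $(i_1,\ldots,i_{n_k})$ of $\pi^{-1}(x)$; by Lemma~\ref{lem:adm} this is a finite-window condition depending only on the last $l(\Sigma_Q)-1$ of those symbols. The image $f^{n_k}(B(x,r_k))$ is an interval of length at least $1/N$ meeting $\Lambda_Q$ at $f^{n_k}(x)$; combining these two facts with transitivity of $\Sigma_Q$ and the finiteness of the set of admissible ``states'' of length $l(\Sigma_Q)-1$, I expect a standard bookkeeping to deliver a cylinder $\Delta_{j_1\ldots j_s}$, of level $s$ bounded in terms of $N$ alone, with $\Delta_{j_1\ldots j_s}\cap\Lambda_Q\subset f^{n_k}(A_k)$. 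Since only finitely many such cylinders exist, Lemma~\ref{nonzero} then supplies the required positive lower bound.

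Combining the two estimates gives $\nu_t(B(x,r_k))\ge c_0 C_N^{-1}r_k^t$ uniformly on $H_N$ along some sequence $r_k\to 0$ depending on the point. For any $\varepsilon>0$, the family $\{B(x,r_k(x))\colon x\in H_N,\ r_k(x)<\varepsilon\}$ covers $H_N$, and the $5r$-covering theorem extracts a disjoint countable subfamily $\{B(x_i,\rho_i)\}_i$ with $H_N\subset\bigcup_i B(x_i,5\rho_i)$. Then
\[
\sum_i(5\rho_i)^t\le 5^t C_N c_0^{-1}\sum_i\nu_t(B(x_i,\rho_i))\le 5^t C_N c_0^{-1}\nu_t(\Lambda_Q)=5^t C_N c_0^{-1}.
\]
As $\varepsilon$ is arbitrary, the Hausdorff $t$-measure of $H_N$ is finite, so $\dim_{\rm H}H_N\le t$. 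Taking the supremum over $N$ and then letting $t\downarrow t_c$ finishes the proof.
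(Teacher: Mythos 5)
Your proposal is correct and follows the paper's approach: fix $t$ close to $t_c$ admitting a $t$-conformal measure, use bounded distortion at hyperbolic instants to get $\nu_t(B(x,r_k))\gtrsim r_k^t$, and conclude; your elaboration of the uniform lower bound on $\nu_t(f^{n_k}(B(x,r_k)\cap\Lambda_Q))$ via Lemma~\ref{lem:adm} and Lemma~\ref{nonzero} is exactly the ``compactness argument'' the paper compresses into one sentence. The only inessential difference is that the decomposition $H=\bigcup_N H_N$ plus the $5r$-covering theorem is not needed: the paper simply shows the lower pointwise dimension $\underline{d}_{\mu_t}(x)\le t$ for each $x\in H$ individually (the $x$-dependent constants $c_1,c_2$ contribute only an $O(1/\log r_k)\to 0$ error) and then cites the standard mass-distribution fact that $\underline{d}_{\mu_t}\le t$ on a set bounds its Hausdorff dimension by $t$.
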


\begin{proof} 
Let $t\ge t_c$ such that there exists a $t$-conformal measure $\mu_t$ for $f|\Lambda_Q$.
Let $x\in H$ and consider the sequences $(n_k)_k$ and $(r_k)_k$ and positive constants $c_1$ and $c_2$ associated to $x$. Note that 
  \begin{equation}\label{man}
  2 r_k\cdot \inf_{y\in B(x,r_k)}\lvert (f^{n_k})'(y)\rvert 
  \le \diam f^{n_k}(B(x,r_k)) \le |I|.
  \end{equation}
  By $t$-conformality of the measure $\mu_t$, we have
  \begin{eqnarray*}
   \mu_t(f^{n_k}(B(x,r_k))) 
  &=& \int_{B(x,r_k)}\lvert (f^{n_k})'(y)\rvert^t d\mu_t(y)\\
  &\le& {c_2}^t\inf_{y\in B(x,r_k)}\lvert (f^{n_k})'(y)\rvert^t
        \mu_t(B(x,r_k)),
  \end{eqnarray*}
  where for the second inequality we used the bounded distortion
  property of the map $f^{n_k}$ at the point $x$. With~\eqref{man} we obtain
  \[
  \mu_t(f^{n_k}(B(x,r_k))) 
  \le \left(\frac{c_2}{2r_k} |I|\right)^t\mu_t(B(x,r_k)).
  \]
As $\diam(f^{n_k}(B(x,r_k)))> c_1$, $\mu_t(f^{n_k}(B(x,r_k)))$ is bounded away from zero uniformly in $k$ by some constant $c$ (by Lemma \ref{nonzero} and a compactness argument)  and hence
  \[
  \frac{\log\mu_t(B(x,r_k))}{\log r_k} 
  \le \frac{-t \log(c_2 |I| \,2^{-1}) + c}{\log r_k} +t.
  \]
  This implies that the lower pointwise dimension of the measure $\mu_t$
  at $x$ can be bounded by $\underline{d}_{\mu_t}(x)\le t$. Since $x\in
  H$ was arbitrary, we obtain $\dim_{\rm H}H\le t$ (see for
  example~\cite{Fal:97}). 
\end{proof}

Note that the conformal measure $\mu_t$ is not necessarily concentrated on the set $H$. But we do not need this property in order to obtain the upper bound of $\dim_{\rm H}H$. 

\begin{proposition}\label{prop:zero}
  The set $H$ has zero Lebesgue measure.
\end{proposition}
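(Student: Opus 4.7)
The plan is to argue by contradiction: suppose $\Leb(H)>0$ and pick a Lebesgue density point $x\in H$, together with the associated data $c_1,c_2>0$, hyperbolic instants $(n_k)_k$ and radii $r_k\searrow 0$ so that $V_k:=f^{n_k}(B(x,r_k))$ is an interval of length at least $c_1$ and $\dist f^{n_k}|_{B(x,r_k)}\le c_2$. The iterate $f^{n_k}$ is injective on $B(x,r_k)$, being a composition of inverse branches of the underlying iterated function system.

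The crux is to transfer the density-one behaviour at $x$ to the enlarged set $V_k$. By the change-of-variables formula and the distortion bound, for every measurable $A\subset B(x,r_k)$,
\[
\frac{\Leb(f^{n_k}(A))}{\Leb(V_k)}\le c_2\cdot\frac{\Leb(A)}{\Leb(B(x,r_k))}.
\]
Applying this with $A=B(x,r_k)\setminus\Lambda_Q\subset B(x,r_k)\setminus H$, the density-point hypothesis forces the right-hand side to tend to $0$. Combined with the injectivity of $f^{n_k}$ and with $f(\Lambda_Q)\subset\Lambda_Q$, this gives $\Leb(V_k\setminus\Lambda_Q)/\Leb(V_k)\to 0$. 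Next I would extract a Hausdorff limit $V_\infty=[a,b]$ of a subsequence of $(V_k)_k$, an interval of length at least $c_1$; any compact $J\subset(a,b)$ lies in $V_k$ for all large $k$, hence $\Leb(J\setminus\Lambda_Q)=0$. Since $\Lambda_Q$ is closed, $J\setminus\Lambda_Q$ is an open null set, hence empty, and therefore $(a,b)\subset\Lambda_Q$.

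This contradicts the generator condition~\eqref{eq1}: for every $n$ one has $\Lambda_Q\subset\Lambda\subset\bigcup_{|i|=n}\Delta_i$, the level-$n$ cylinders are pairwise disjoint closed intervals of diameter at most $d_n\to 0$, so no interval of positive length can lie in $\Lambda$. The main obstacle is the distortion-transfer step: since $|(f^{n_k})'|$ is typically enormous, one must compare Lebesgue measures in \emph{ratio} form so that the dilation factor cancels and only the $c_2$ bound does the work. The remaining ingredients---Lebesgue differentiation for the density point, Hausdorff compactness of intervals in $I$, and the empty-interior property of $\Lambda$---are routine.
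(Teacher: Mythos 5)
Your proof is correct, and while it shares the paper's basic mechanism (Lebesgue density theorem together with the bounded-distortion transfer at hyperbolic instants), it closes the argument by a genuinely different route. The paper shows directly that no $x\in H$ is a density point of $H$: it asserts that there is a uniform constant $c_3>0$ (depending only on $c_1$) so that every blow-up $V_k=f^{n_k}(B(x,r_k))$ contains a gap $G_k\subset V_k\setminus\Lambda_Q$ of length $\geq c_3$, pulls this gap back with distortion $\leq c_2$, and thereby exhibits a definite proportion of $B(x,r_k)$ disjoint from $H$, giving $\limsup_{r\to 0}\Leb(H\cap B(x,r))/\Leb(B(x,r))\le 1-c_3 c_2^{-2}\lvert I\rvert^{-1}<1$. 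You instead argue by contradiction: a hypothetical density point forces $\Leb(V_k\setminus\Lambda_Q)/\Leb(V_k)\to 0$, a Hausdorff-limit of the $V_k$ yields an interval $(a,b)$ with $(a,b)\subset\Lambda_Q$, and this contradicts the generator condition~\eqref{eq1}, which forces $\Lambda_Q$ to have empty interior. Your version avoids the uniform-gap lemma (which the paper asserts without proof and which itself requires a small compactness argument), at the cost of losing the quantitative density bound that the paper's version delivers; it also requires you to observe, as you do, that $f^{n_k}$ is injective on $B(x,r_k)$ (immediate from the inverse-branch structure) so that $f^{n_k}(B(x,r_k)\cap\Lambda_Q)\subset\Lambda_Q$ lets you pass from $f^{n_k}(B(x,r_k)\setminus\Lambda_Q)$ to $V_k\setminus\Lambda_Q$. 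Both arguments implicitly require the (standard, but worth noting) measurability of $H$ for the Lebesgue density theorem to apply.
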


\begin{proof}
  We will show that no $x\in H$ is a Lebesgue-density point of $H$, that is, we have
  \begin{equation}\label{hum}
  \liminf_{r\to 0}\frac{\Leb(H\cap B(x,r))}{\Leb(B(x,r))}<1 \text{ for every
  }x\in H,
  \end{equation}
  where $\Leb$ denotes the Lebesgue measure on $\bR$. Since a set is
  nowhere dense if and only if it has zero Lebesgue measure, the assertion then
  follows. 

  Let $x\in H$ with associated sequences $(n_k)_k$ and $(r_k)_k$ and numbers
  $c_1>0$ and $c_2\ge 1$. There exists a positive number $c_3$, which depends only  on $c_1$ but neither on $n_k$ nor $r_k$, such that for every number $n_k$ there exists an interval gap $G_k\subset f^{n_k}(B(x,r_k))\setminus\Lambda_Q$ of length at least $c_3$. Denote $\widetilde G_k=f^{-n_k}(G_k)$. Analogously to~\eqref{man}, by 
  bounded distortion of the map $f^{n_k}$ we have 
  \[
 2  r_k \cdot \lvert (f^{n_k})'(x)\rvert \cdot c_2^{-1}
  \le \diam f^{n_k}(B(x,r_k)\cap \Lambda)\le \lvert I\rvert
  \]
  which implies 
  \[
  c_3
  \le \lvert G_k\rvert 
  \le \lvert \widetilde G_k\rvert \sup_{y\in B(x,r_k)}\lvert (f^{n_k})'(y)\rvert
  \le \lvert \widetilde G_k\rvert \lvert (f^{n_k})'(x)\rvert c_2 
  \le \lvert \widetilde G_k\rvert\, \lvert I\rvert \, \frac{ c_2^2}{2r_k} .
  \]
  Thus we obtain
  \[
  \frac{\Leb(H\cap B(x,r_k))}{\Leb(B(x,r_k))}
  \le \frac{r_k (1- c_3 c_2^{-2}\lvert I\rvert^{-1})}{r_k} <1
  \]
  for a sequence $(r_k)_k$. From here~\eqref{hum} follows.
\end{proof}

\section{Points with infinitely many hyperbolic instants - geometric description}\label{sec:4}

Our goal in this section is to prove Theorem~\ref{thm:hyp}, that is, to prove that a point belongs to $\Lambda_Q\setminus H$ if and only if it is either a parabolic periodic point or a  parabolic preperiodic point.

We briefly sketch how we are going to prove this result.
We first formulate the A.~Schwartz lemma in our setting. We then identify the family of intervals to which this lemma is applicable. The main step in the proof is done by a bootstrapping argument: by using of the A.~Schwartz lemma we are able to cut out all points that visit a certain cylinder infinitely many times. By doing so, we can restrict ourself to a new, strictly smaller, Markov system and use the A.~Schwartz lemma again. This procedure terminates after finitely many steps.

\begin{proof}[Proof of Theorem~\ref{thm:hyp}]
Since $x\in H$ if and only if $f(x)\in H$, by Proposition~\ref{prop:irred} we can restrict ourself to the case that $\Sigma_Q$ is irreducible. We assume that $Q$ is a representation of $\Sigma_Q$.

We start with a distortion result which is based on the A.~Schwartz lemma~\cite{Sch:63}.
Denoting by ${\rm Lip}\,g$ the Lipschitz constant of $g$, let 
\[
\theta\eqdef \max_{1\le i\le p} {\rm Lip}\,\log\,\lvert g_i'\rvert
\]
and 
\begin{equation}\label{lamdef}
\lambda \eqdef \exp (4 \theta \lvert I\rvert).
\end{equation}

\begin{lemma} \label{lem:sch}
Let $G$ and $K$ be two intersecting intervals which are contained in some $I_i$, $1\le i\le p$, and which satisfy $\lambda\lvert K\rvert \le \lvert G\rvert$. If $(i_1i_2\ldots )\in \Sigma$ satisfies
\[
\sum_{k=1}^\infty \diam  g_{i_k\ldots i_1}(G)\leq 2\lvert I\rvert,
\]
then for every $n\in\bN$ we have
\[
\diam  g_{i_n\ldots i_1}(K) \leq \diam  g_{i_n\ldots i_1}(G) 
\quad\text{ and }\quad
\dist  g_{i_n\ldots i_1}|_{G\cup K} \leq \lambda .
\]
\end{lemma}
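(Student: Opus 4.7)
The plan is to prove both conclusions simultaneously by induction on $n$. The base case $n=0$ is trivial (empty composition gives the identity, and $|K|\le|G|$ by hypothesis). Suppose both conclusions hold for all indices up to $n-1$.

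\emph{Step 1: distortion bound on $G\cup K$.} I would use the chain rule and the Lipschitz bound $|\log|g_i'(u)|-\log|g_i'(v)||\le\theta|u-v|$ to write
\[
\log\dist g_{i_n\ldots i_1}|_{G\cup K} \le \theta\sum_{k=0}^{n-1}\diam g_{i_k\ldots i_1}(G\cup K),
\]
with the convention that the $k=0$ term is $|G\cup K|$ itself. Since $G\cap K\neq\emptyset$, the images $g_{i_k\ldots i_1}(G)$ and $g_{i_k\ldots i_1}(K)$ still intersect, so the inductive hypothesis $\diam g_{i_k\ldots i_1}(K)\le\diam g_{i_k\ldots i_1}(G)$ yields $\diam g_{i_k\ldots i_1}(G\cup K)\le 2\diam g_{i_k\ldots i_1}(G)$ for $1\le k\le n-1$. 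Combined with $|G\cup K|\le|I_i|\le|I|$ (since $G,K\subset I_i$) and the summability hypothesis $\sum_{k=1}^\infty\diam g_{i_k\ldots i_1}(G)\le 2|I|$, the right-hand side is bounded by a universal multiple of $\theta|I|$. The definition \eqref{lamdef} of $\lambda$ is calibrated to absorb these constants and yield $\dist g_{i_n\ldots i_1}|_{G\cup K}\le\lambda$.

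\emph{Step 2: diameter comparison.} With the distortion bound in hand, I would apply the mean value theorem: there exist $\xi\in K$ and $\eta\in G$ with $\diam g_{i_n\ldots i_1}(K)=|(g_{i_n\ldots i_1})'(\xi)|\cdot|K|$ and $\diam g_{i_n\ldots i_1}(G)=|(g_{i_n\ldots i_1})'(\eta)|\cdot|G|$. Since both $\xi,\eta\in G\cup K$, the distortion estimate combined with the hypothesis $\lambda|K|\le|G|$ yields
\[
\frac{\diam g_{i_n\ldots i_1}(K)}{\diam g_{i_n\ldots i_1}(G)}
= \frac{|(g_{i_n\ldots i_1})'(\xi)|}{|(g_{i_n\ldots i_1})'(\eta)|}\cdot\frac{|K|}{|G|}
\le \lambda\cdot\frac{1}{\lambda}=1.
\]
This closes the induction.

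\emph{Main obstacle.} The delicate part is Step 1: the specific value $\lambda=\exp(4\theta|I|)$ is just large enough to close the inductive loop, so one must be careful to use $G,K\subset I_i$ to bound $|G\cup K|\le|I|$ and to exploit the inductive hypothesis $\diam g_{i_k\ldots i_1}(K)\le\diam g_{i_k\ldots i_1}(G)$ effectively when bounding $\diam g_{i_k\ldots i_1}(G\cup K)$. A secondary subtlety, of the sort one has to check but not fight, is that the composition $g_{i_n\ldots i_1}$ is well-defined on all of $G\cup K$ at each intermediate step, which follows inductively from $g_{i_k\ldots i_1}(G\cup K)\subset I_{i_k}\subset D(g_{i_{k+1}})$.
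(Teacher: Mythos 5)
Your proof is correct and follows essentially the same route as the paper: a simultaneous induction on $n$, obtaining the distortion bound on $g_{i_n\ldots i_1}|_{G\cup K}$ from the Lipschitz control of $\log\lvert g_i'\rvert$ together with the inductive estimate $\diam g_{i_k\ldots i_1}(G\cup K)\le 2\diam g_{i_k\ldots i_1}(G)$, and then recovering the diameter comparison from the distortion bound and $\lambda\lvert K\rvert\le\lvert G\rvert$. Your slight vagueness about whether the constants actually close at $\exp(4\theta\lvert I\rvert)$ in fact mirrors a small slip in the paper's own arithmetic (the $k=0$ term of the chain-rule sum is not covered by the hypothesis $\sum_{k\ge 1}\diam g_{i_k\ldots i_1}(G)\le 2\lvert I\rvert$, so one really gets an exponent $\le 5\theta\lvert I\rvert$ or $6\theta\lvert I\rvert$), but since $\lambda$ can simply be enlarged in \eqref{lamdef} without affecting anything downstream, this is not a substantive gap.
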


\begin{proof}
We prove the statement by induction over $n$.
The assertion is true for $n=1$ because we have $\lvert K\rvert<\lvert G\rvert$ and 
\[
\log(\lvert  g_i'(x)\rvert\lvert  g_i'(y)\rvert^{-1})\le \theta\lvert x-y\rvert <
 2\theta\lvert I\rvert
\]
for every $x$, $y\in G\cup K$.

Let us assume that the assertion is true for all $n<N$.
We have

\[
\dist  g_{i_N\ldots i_1}|_{G\cup K} \leq 
\exp (\theta \sum_{k=0}^{N-1}\diam  g_{i_k\ldots i_1}(G\cup K)) .
\]
By the inductive hypothesis, for all $k<N$ we have 
\[
\diam  g_{i_k\ldots i_1}(G\cup K)
\leq 2\diam  g_{i_k\ldots i_1}(G).
\]
 This implies that
\[
\dist  g_{i_N\ldots i_1}|_{G\cup K} \leq \exp (4 \theta \lvert I\rvert) 
= \lambda
\]
and that
\[
\frac {\diam  g_{i_N\ldots i_1}(K)} {\diam  g_{i_N\ldots i_1}(G)} 
\leq \frac {\lvert K\rvert} {\lvert G\rvert}
\dist  g_{i_N\ldots i_1}|_{G\cup K} \leq 1 .
\]
This proves the lemma.
\end{proof}

We will call an open interval $G$ contained in one of the intervals $I_i$ a \emph{gap} if $G\cap \Lambda_Q = \emptyset$ and if at least one of endpoints of $G$ belongs to $\Lambda_Q$.

We will apply Lemma \ref{lem:sch} to the following geometrical situation:

\begin{lemma} \label{lem:geom}
Let $G\subset I_i$ be a gap, $x\in \Lambda_Q$ be an endpoint of $G$ and let $(i_1i_2\ldots)\in \Sigma$ be an admissible sequence for $x$.
Assume that the points $ g_{i_k\ldots i_1}(x)$ are all distinct for $k\in \bN$.
Then
\[
\sum_{k=1}^\infty \diam  g_{i_k\ldots i_1}(G) \leq 2\lvert I\rvert .
\]
\end{lemma}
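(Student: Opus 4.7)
The plan is to verify that each image $G_k := g_{i_k \ldots i_1}(G)$ is itself a gap of $\Lambda_Q$ with $y_k := g_{i_k \ldots i_1}(x) \in \Lambda_Q$ on its boundary, and that for any connected component $U$ of $I \setminus \Lambda_Q$, at most two of the intervals $G_k$ are contained in $U$. Since the components of $I\setminus\Lambda_Q$ are pairwise disjoint subintervals of $I$, summing their lengths with a factor of $2$ will then yield the desired estimate.

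First I would record some easy properties of $G_k$. Each $G_k$ is an open interval contained in $\interior(I_{i_k})$, because $g_{i_k \ldots i_1}$ is a homeomorphism onto its image and $R(g_{i_k}) \subset \interior(I_{i_k})$; one of its endpoints is $y_k$, and admissibility of the sequence $(i_1i_2\ldots)$ for $x$ guarantees $y_k \in \Lambda_Q$. Because the ranges $R(g_j)$ are pairwise disjoint subsets of distinct $I_j$, the map $f$ is unambiguously defined on each $G_k$ as the appropriate inverse branch $g_{i_k}^{-1}$, and a straightforward induction gives $f^k(G_k) = G$.

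The only nontrivial step, and the main obstacle I anticipate, is to show $G_k \cap \Lambda_Q = \emptyset$. Suppose for contradiction that some $z \in G_k \cap \Lambda_Q$. The forward invariance $f(\Lambda_Q) \subset \Lambda_Q$ recorded just before the definition of admissibility propagates to $f^k(\Lambda_Q) \subset \Lambda_Q$, so $f^k(z) \in \Lambda_Q$; yet $f^k(z) \in G$ and $G$ was chosen so that $G \cap \Lambda_Q = \emptyset$, a contradiction. Hence $G_k$ is a gap with $y_k \in \Lambda_Q$ on its boundary, and so $G_k$ lies in the unique connected component $U_k$ of $I\setminus\Lambda_Q$ having $y_k$ as one of its (at most two) endpoints.

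Finally, by hypothesis the points $y_k$ are pairwise distinct and each $U$ has at most two endpoints in $\Lambda_Q$, so the index set $\{k \colon U_k = U\}$ has cardinality at most two for every component $U$. Therefore
\[
\sum_{k=1}^\infty \diam g_{i_k\ldots i_1}(G) = \sum_{U} \sum_{k \colon U_k = U} \lvert G_k\rvert \leq 2\sum_{U} \lvert U\rvert \leq 2\lvert I\rvert,
\]
where the outer sum runs over those components of $I\setminus\Lambda_Q$ that appear as some $U_k$, completing the proof.
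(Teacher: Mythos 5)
Your proof is correct, and it takes a genuinely different route from the paper's. The paper's proof splits the indices $k$ into two classes according to whether $g_{i_k\ldots i_1}$ is orientation preserving or orientation reversing on $I_i$, and then shows directly that the images $g_{i_k\ldots i_1}(G)$ within a single orientation class are pairwise disjoint (if two same-orientation images overlapped, the marked endpoints $y_k$, $y_\ell$ would lie on the same side, forcing one endpoint into the other image, which would put a forward iterate of a $\Lambda_Q$-point into the gap $G$ or contradict the distinctness of the $y_k$). Each class then contributes at most $\lvert I\rvert$. You instead observe that every $G_k$ is itself a gap sitting inside a single connected component $U_k$ of $I\setminus\Lambda_Q$, with $y_k\in\Lambda_Q$ as an endpoint of $U_k$; distinctness of the $y_k$ together with the fact that a bounded open interval has only two endpoints caps the number of indices per component at two, and summing $\lvert G_k\rvert\le\lvert U_k\rvert$ over components gives $2\lvert I\rvert$. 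The two arguments extract the same factor of $2$ from complementary observations (orientation parity versus two-endpoints-per-component) and are of essentially equal difficulty; yours sidesteps the orientation bookkeeping at the cost of invoking the complement's component structure, which is a mild and clean trade. One small point worth spelling out in a final write-up: you should note (as you implicitly do) that $g_{i_k\ldots i_1}$ is defined and injective on $\overline G$ because $\overline G\subset I_i\subset\bigcup_j I_j$ and each $g_{i_\ell}$ maps into $\interior(I_{i_\ell})\subset\bigcup_j I_j$, so that $y_k$ really is an endpoint of $G_k$.
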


\begin{proof}
We will prove that $ g_{i_k\ldots i_1}(G)$ are pairwise disjoint for all such
$k$ for which $ g_{i_k\ldots i_1}$ are both orientation preserving or both orientation reversing (on $I_i$).

Assume that it is not true, i.e., that $ g_{i_k\ldots i_1}(G)$ intersects $ g_{i_l\ldots i_1}(G)$ for $ g_{i_k\ldots i_1}$ and $ g_{i_\ell\ldots i_1}$ both orientation preserving or both orientation reversing.
The points $ g_{i_k\ldots i_1}(x)$ and $ g_{i_\ell\ldots i_1}(x)$ are both right endpoints (or both left endpoints) of $ g_{i_k\ldots i_1}(G)$ and $ g_{i_\ell\ldots i_1}(G)$, correspondingly.
Assume that $k<\ell$.
There are three possible cases.

Case 1: $y= g_{i_k\ldots i_1}(x)\in  g_{i_\ell\ldots i_1}(G)$. 
In this situation $f^\ell(y)\in G$. 
However, $f^\ell(y)=f^{\ell-k}(x)\in \Lambda_Q$, which is a contradiction.

Case 2: $y= g_{i_\ell\ldots i_1}(x)\in  g_{i_k\ldots i_1}(G)$.
In this situation $f^k(y)\in G$.
However, $f^k(y)= g_{i_{\ell-k}\ldots i_1}(x)\in \Lambda_Q$ (by definition of admissible sequence), which is again a contradiction.

Case 3: $ g_{i_k\ldots i_1}(x)= g_{i_\ell\ldots i_1}(x)$ is excluded by the
assumption of the lemma. 
We are done.
\end{proof}

Recall the definition of $d_n$ in~\eqref{eq1}. Let $G$ be a gap and let $n$ be so big that 
\begin{equation} \label{eqn:assump}
d_n \leq \frac{\lvert G\rvert}{3\lambda}.
\end{equation}
We also assume that 
\begin{equation} \label{eqn:assump2}
n\geq l(Q).
\end{equation}
Let $x$ be an endpoint of $G$ which is contained in $\Lambda_Q$. 
Denote by $K$ the $d_n$-neighbour\-hood of $\Delta_n(x)=\Delta_{j_1\ldots j_n}$.
Let $Q'\eqdef Q\cup \{(j_1\ldots j_n)\}$.

\begin{remark}
Before continuing the proof, let us remind you that the range of $g_i$ lies strictly inside $I_i$.
Hence, $\Lambda$ is strictly contained in the interior of $\bigcup g_i(\bigcup I_j)$.
In particular, if $g_{i_k\ldots i_1}|_K$ has distortion bounded by $\lambda$ then for any $y\in g_{i_k\ldots i_1}(\Delta_n(x))$ the map $f^k$ is well defined on $B(y, D|\Delta_{n+k}(y)|/\lambda)$ for some $D$ not depending on $k$.
\end{remark}

\begin{lemma} \label{lem:key}
If the assertion of Theorem \ref{thm:hyp} is true for $\Lambda_{Q'}$, it is
also true for $\Lambda_Q$. 
\end{lemma}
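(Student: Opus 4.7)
The plan is to take an arbitrary $y\in\Lambda_Q$ and show that, under the inductive hypothesis for $\Lambda_{Q'}$, either $y\in H$ or $y$ is a parabolic periodic or parabolic preperiodic point. Since $y\in H$ if and only if $f(y)\in H$, I would dichotomize according to whether the forward orbit of $y$ visits the cylinder $\Delta_n(x)=\Delta_{j_1\ldots j_n}$ only finitely or infinitely often.

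If the visits are finite, then for $N$ large enough the forward orbit of $f^N(y)$ never enters $\Delta_n(x)$, so the symbolic expansion of $f^N(y)$ avoids the word $(j_1\ldots j_n)$; since it already avoids every word in $Q$, this places $f^N(y)\in\Lambda_{Q'}$. The hypothesis for $\Lambda_{Q'}$ then forces either $f^N(y)\in H$ (whence $y\in H$ as well) or $f^N(y)$ parabolic (pre)periodic (whence $y$ is parabolic preperiodic).

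The substantive case is when the forward orbit meets $\Delta_n(x)$ at an increasing sequence of times $m_1<m_2<\cdots$. If $y$ is itself (pre)periodic, Proposition~\ref{prop:dist} already decides: either the associated periodic orbit carries positive Lyapunov exponent and $y\in H$, or it is parabolic and $y$ is parabolic (pre)periodic. Otherwise $y$ is non-preperiodic, and I would produce hyperbolic instants $n_k=m_k$ as follows. From the identity $y=g_{i_1\ldots i_{m_k}}(f^{m_k}(y))$ one reads off the admissible initial segment $(i_{m_k},i_{m_k-1},\ldots,i_1)$ of a backward orbit for $f^{m_k}(y)$. Transitivity of the irreducible system $\Sigma_Q$ together with non-preperiodicity of $y$ permits extending this segment to an infinite admissible sequence $(j_\ell)_\ell$ along which the backward iterates $g_{j_\ell\ldots j_1}(f^{m_k}(y))$ are pairwise distinct. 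Lemma~\ref{lem:geom} then gives $\sum_\ell \diam g_{j_\ell\ldots j_1}(G)\le 2\lvert I\rvert$, and since $\lambda\lvert K\rvert\le\lvert G\rvert$ by~\eqref{eqn:assump}, Lemma~\ref{lem:sch} yields $\dist g_{j_{m_k}\ldots j_1}|_{G\cup K}\le\lambda$. The remark preceding this lemma then supplies a ball $B(y,r_k)$, with $r_k=D\lvert\Delta_{n+m_k}(y)\rvert/\lambda$, on which $f^{m_k}$ is defined.

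It remains to verify the two hyperbolic-instant conditions on $B(y,r_k)$. The distortion bound $\dist f^{m_k}|_{B(y,r_k)}\le c_2$, with $c_2$ depending only on $\lambda$ and $D$, is immediate from the Schwartz estimate on $K$. For the diameter lower bound, combining bounded distortion with the identity $f^{m_k}(\Delta_{n+m_k}(y))=\Delta_n(x)$ yields $\lvert\Delta_{n+m_k}(y)\rvert\cdot\lvert(f^{m_k})'(y)\rvert\ge\lvert\Delta_n(x)\rvert/\lambda$, whence $\diam f^{m_k}(B(y,r_k))$ is bounded below by a positive constant depending only on $\lvert\Delta_n(x)\rvert$, $D$, and $\lambda$. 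Finally $r_k\to 0$ by the generator condition~\eqref{eq1}. The main obstacle in carrying out this plan is the selection of the infinite admissible backward extension with pairwise distinct iterates required by Lemma~\ref{lem:geom} --- this is where transitivity of the irreducible representation of $\Sigma_Q$ and the assumed non-preperiodicity of $y$ both enter --- together with the uniform-in-$k$ tracking of the geometric constants $D$, $\lambda$, and $\lvert\Delta_n(x)\rvert$.
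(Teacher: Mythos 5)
Your overall strategy (split on finite versus infinite visits to $\Delta_n(x)$, use Lemma~\ref{lem:geom} and Lemma~\ref{lem:sch} to manufacture hyperbolic instants, and fall back on Proposition~\ref{prop:dist} for (pre)periodic points) is the right one and matches the paper's. However, the crucial step — verifying the hypothesis of Lemma~\ref{lem:geom} — contains a genuine gap.

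Lemma~\ref{lem:geom} requires that the backward iterates $g_{j_\ell\ldots j_1}(x)$ of the \emph{gap endpoint} $x$ be pairwise distinct along the admissible sequence; it says nothing about the backward iterates of $f^{m_k}(y)$, which is the point whose distinctness you establish from the non-preperiodicity of $y$. The two are distinct points of $\Delta_n(x)$, and distinctness of the backward orbit of one does not transfer to the other. The problem materializes precisely when $x$ itself is a periodic point: then there \emph{is} an admissible sequence for $x$ whose backward iterates coincide, namely the one repeating $x$'s periodic word $(\ell_1\ldots\ell_m)$, and if the symbolic expansion of $y$ happens to carry $(\ell_1\ldots\ell_m)$ just before its $(j_1\ldots j_n)$-block, your chosen segment retraces $x$'s cycle and the hypothesis of Lemma~\ref{lem:geom} fails — even though $y$ is non-preperiodic and the backward iterates of $f^{m_k}(y)$ are all distinct. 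The paper therefore cases on whether $x$ (not $y$) is periodic: if $x$ is aperiodic, distinctness of its backward orbit is automatic; if $x$ is periodic, one shows that for $y$ not a preimage of $x$ there are infinitely many return times $k$ at which $f^{k-m}(y)\notin\Delta_{\ell_1\ldots\ell_m}$, and along the finite segment of length $k$ determined by such a visit the backward iterates of $x$ are distinct.

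Two smaller points. First, you only ever need the \emph{finite} admissible segment of length $m_k$; extending to an infinite admissible sequence with all backward iterates distinct is superfluous (Lemma~\ref{lem:sch} up to time $N$ only uses the sum up to $N-1$), and indeed may be impossible when $x$ is periodic, which would needlessly derail the argument. Second, the dichotomy "$y$ (pre)periodic / non-preperiodic" is the wrong axis to split on; the paper's split is on $x$ periodic or not, and within the periodic case, on $y$ being a preimage of $x$ or not. Your dichotomy incidentally covers the exceptional points ($y$ a preimage of parabolic $x$ is preperiodic), but it leaves the non-preperiodic, $x$-periodic subcase unprotected, which is exactly where the unpatched gap sits.
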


\begin{proof}
We denote by $K_\ast$ the set of points $y\in\Lambda_Q$ for which  $f^k(y)\in
\Delta_n(x)$ for infinitely many $k\in\bN$.
We are going to show first that every point in $K_\ast$ which is not a preimage of a  parabolic periodic point is contained in $H$.

Note first that by Lemma \ref{lem:adm} any sequence admissible for $x$ is
necessarily admissible for all the points in $\Delta_n(x)$. 
There are two cases.

Case 1: $x$ is not a periodic point of $f$. Then for any sequence $(i_1i_2\ldots)$ which is admissible for $x$, the points $ g_{i_k\ldots i_1}(x)$, $k\ge 1$, are pairwise distinct. 
Hence, by Lemma~\ref{lem:geom} we can apply Lemma~\ref{lem:sch} to the sets
$G$ and $K$ and to the maps $ g_{i_k\ldots i_1}$. 
In particular we have 
\begin{equation} \label{eqn:bdp}
\dist  g_{i_k\ldots i_1}|_{G\cup K}\le \lambda
\end{equation}
for every $k\in\bN$.

For any $y\in \Lambda_Q$, if $y\in  g_{i_k\ldots i_1}(\Delta_n(x))$ for some $k$-tuple $(i_k\ldots i_1)$ then $f^{k}(\Delta_{n+k}(y))=\Delta_n(x)$.
It follows from~\eqref{eqn:bdp} that we have
\[
\diam f^k\left(B\left(y,\frac{D \lvert\Delta_{n+k}(y)\rvert}
                                              {\lambda}
                                              \right)\right)
\geq \frac D {\lambda^2} \lvert \Delta_{j_1\ldots j_n}\rvert
\]
and
\[
\dist f^k|_{B\left(y,\frac{D \lvert\Delta_{n+k}(y)\rvert}{\lambda}\right)} \leq \lambda .
\]
Hence, if $f^k(y)\in\Delta_n(x)$ for infinitely many $k$, then $y\in H$. We conclude  $K_\ast\subset H$ in Case 1.

Case 2: $x$ is a periodic point of $f$ of period $m$. We have $x=\pi((\ell_1\ldots \ell_m)^\bN)$ for some $m$-tuple $(\ell_1\ldots \ell_m)$ and 
\begin{equation}\label{equ}
\ell_k=j_k \text{ for }k\leq \min\{m,n\}.
\end{equation}
Let $y\in K_\ast$.

Assume for a moment that there exists $M_0>0$ such that $f^k(y)\in \Delta_{j_1\ldots j_n}$ implies that 
\begin{equation} \label{eqn:a3}
f^{k-m}(y)\in\Delta_{\ell_1\ldots \ell_m}
\end{equation}
for every $k>M_0$.
It means that in the symbolic expansion of $y$ from some moment on any
appearance of $(j_1\ldots j_n)$ is immediately preceded by $(\ell_1\ldots \ell_m)$, that is
\[
y=\pi(\underset{k-m \text{ symbols }}{\ast\,\,\ldots\,\,\ast}\ell_1\ldots \ell_m\, j_1\ldots j_n\ldots)
.\]
From~\eqref{equ} it then follows that the word $(\ell_1\ldots \ell_m j_1 \ldots j_n)$ 
must be preceded by $(\ell_1\ldots \ell_m)$ again, and so on. As $y\in K_\ast$, the word $(j_1\ldots j_n)$ appears infinitely many times, hence the symbolic expansion of $y$ is of the form $(i_1\ldots i_k
(\ell_1\ldots \ell_m)^\bN)$, that is, $y$ is a preimage of the periodic point $x$. 

If $y$ is a preimage of $x$, then, by Proposition~\ref{prop:dist}, $y\in H$ if and only if $x$ is an expanding periodic point. 

If $y$ is not a preimage of $x$ then~\eqref{eqn:a3} is not satisfied, that is, there exist infinitely many numbers $k$ such that $f^k(y)\in \Delta_n(x)$ and 
$f^{k-m}(y)\notin \Delta_{\ell_1\ldots \ell_m}$. 
For any such  $k$, the sequence $(i_k\ldots i_1)$ which satisfies $y= g_{i_k\ldots i_1}(f^k(y))$ is admissible for all points in $\Delta_n(x)$ and does not begin with $(\ell_m\ldots \ell_1)$. 
Hence, the points $ g_{i_\ell\ldots i_1}(x)$ are pairwise distinct for \emph{every} $1 \leq \ell\leq k$.
By Lemma~\ref{lem:geom}, we can apply Lemma~\ref{lem:sch} to the sets $G$ and $K$ and to the maps $ g_{i_k\ldots i_1}$.
We obtain
\[
\diam f^k\left(B\left(y,\frac{D \lvert\Delta_{n+k}(y)\rvert}{\lambda}\right)\right)
\geq \frac D {\lambda^2} \lvert \Delta_{j_1\ldots j_n}\rvert
\] 
and 
\[
\dist f^k |_{B\left(y,\frac{D \lvert \Delta_{n+k}(y)\rvert}{\lambda}\right)} \leq \lambda
\]
like in Case 1.
This implies that all the points in $K_\ast$ which are not preimages of parabolic periodic points belong to $H$ if we consider the Case 2.

Consider now a point $y\in\Lambda_Q$ for which $f^{k_i}(y)\in \Delta_n(x)$ for only finitely many $k_i$. Then $f^k(y)\in \Lambda_{Q'}$ for every $k>\max_ik_i$.
Hence,
\[
\Lambda_Q\subset K_\ast \cup \bigcup_{(i_1\ldots i_m)}  g_{i_1\ldots i_m}(\Lambda_{Q'}),
\]
where the union is taken over some infinite but countable subset of $\Sigma_\ast$. 
In such a situation, to obtain the assertion of Theorem~\ref{thm:hyp} for $\Lambda_Q$ it is enough to prove this assertion for $\Lambda_{Q'}$.
\end{proof}

Lemma \ref{lem:key} gives us the inductive step for the proof of Theorem \ref{thm:hyp}.
We start by choosing $p$ gaps (one in each $I_i$) and $n\in\bN$ such that \eqref{eqn:assump} is satisfied for each gap and that \eqref{eqn:assump2} is satisfied as well.
To a chosen gap $G\subset I_i$ we apply Lemma \ref{lem:key}.
We see that to prove the assertion of Theorem \ref{thm:hyp} for $\Lambda_Q$ it is enough to prove it for some subsystem $\Lambda_{Q'}$.
Using Proposition \ref{prop:irred} we might well assume that $\Sigma_{Q'}$ is irreducible.

Either $\Lambda_{Q'}$ is disjoint from $I_i$ (in this case we proceed to another gap) or we can find a new gap $G'\supset G$.
Its diameter is obviously not smaller than the diameter of $G$ and hence \eqref{eqn:assump} is satisfied for $G'$.
Lemma \ref{lem:incl} implies that \eqref{eqn:assump2} is satisfied for $\Lambda_{Q'}$.
Hence, we can repeat the inductive step.

Since every Markov system which we consider in this chain is strictly contained in the previous one and since $l(\Sigma_Q)$ is bounded from above, the procedure must eventually terminate.
In other words, at some moment we will obtain $\Lambda_{Q'}=\emptyset$.
This will end the proof of Theorem~\ref{thm:hyp}.
\end{proof}

\begin{corollary}\label{cor:finite}
There exist at most finitely many parabolic periodic points.
\end{corollary}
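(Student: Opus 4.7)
My plan is to piggy-back on the inductive reduction used to prove Theorem~\ref{thm:hyp}. That proof constructs a strictly decreasing chain
\[
\Lambda_Q = \Lambda_{Q^{(0)}} \supsetneq \Lambda_{Q^{(1)}} \supsetneq \cdots \supsetneq \Lambda_{Q^{(N)}} = \emptyset
\]
of irreducible Markov systems, each obtained from the previous by adjoining a single length-$n$ forbidden word corresponding to a chosen gap endpoint. I would perform a reverse induction along this chain and show that at each step at most finitely many parabolic periodic points are removed from $\Lambda_{Q^{(i)}}$; combined with the vacuous base case $\Lambda_{Q^{(N)}}=\emptyset$, this would yield the corollary.

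For the inductive step, I would fix a parabolic periodic point $y \in \Lambda_{Q^{(i)}}\setminus \Lambda_{Q^{(i+1)}}$. Since $y$ is periodic, its orbit enters the distinguished cylinder $\Delta_n(x)$ (where $x$ is the gap endpoint used at step $i$) periodically, hence infinitely often, which places $y$ in the set $K_\ast$ from the proof of Lemma~\ref{lem:key}. Then I would invoke the dichotomy of that lemma: in Case~1, where $x$ is non-periodic, one has $K_\ast \subset H$, and by Proposition~\ref{prop:dist} the set $H$ contains no parabolic periodic point, so no contribution arises; in Case~2, where $x$ is periodic of some period $m$, any $y \in K_\ast$ lying outside $H$ must be a preimage of $x$, and because $y$ is itself periodic, combining $f^\ell(y)=y$ with $f^k(y)=x$ forces $y$ onto the orbit of $x$, so at most the $m$ points of that orbit are contributed at this step.

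Summing these finite contributions over the $N$ steps of the chain would complete the argument. The only bookkeeping point I would need to verify carefully is the elementary symbolic fact that a periodic preimage of a periodic point $x$ must lie on the orbit of $x$ (obtained by applying $f$ suitably many times to the identity $f^k(y)=x$ and using $f^\ell(y)=y$); I do not regard this as a genuine obstacle, so the whole statement is essentially a corollary of the inductive mechanism already set up for Theorem~\ref{thm:hyp}.
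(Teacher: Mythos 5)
Your proposal is correct and follows essentially the same route as the paper's proof: both exploit the finiteness of the inductive chain from Theorem~\ref{thm:hyp} and observe that each step can contribute at most one orbit of parabolic periodic points (namely the orbit of the chosen gap endpoint, when it is a parabolic periodic point), every other periodic point landing in $H$ and therefore, by Proposition~\ref{prop:dist}, being non-parabolic. Your version is merely more explicit in walking through the Case~1/Case~2 dichotomy of Lemma~\ref{lem:key} and in noting that a periodic preimage of the periodic gap endpoint $x$ must lie on the orbit of $x$, but the underlying mechanism is identical to the paper's.
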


\begin{proof}
In the proof of Theorem \ref{thm:hyp} we used an inductive procedure that
produces at most one parabolic periodic point at every step.
All other periodic points (not the endpoints of gaps on some step of our procedure) belong to $H$ and thus cannot be parabolic.
As the inductive procedure terminates after finitely many iterations, the
number of parabolic periodic points can be at most finite.
\end{proof}

The next corollary follows in the case that $f$ is $C^2$ from the Ma\~{n}e hyperbolicity theorem as well, see~\cite{dMevSt} for the formulation. 

\begin{corollary}\label{cor:no}
If there exist no parabolic periodic points, the system is uniformly hyperbolic.
\end{corollary}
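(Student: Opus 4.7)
The strategy is to upgrade the identity $\Lambda_Q = H$ supplied by Theorem~\ref{thm:hyp} into uniform exponential expansion by a compactness-and-iteration argument.

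First I would note that any parabolic preperiodic point maps under some iterate onto a parabolic periodic point, so the hypothesis rules out both, and Theorem~\ref{thm:hyp} gives $\Lambda_Q = H$.

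Next, for each $x \in \Lambda_Q$, I would combine the diameter estimate $\diam f^{n_k}(B(x, r_k)) > c_1(x)$ with the distortion estimate $\dist f^{n_k}|_{B(x, r_k)} < c_2(x)$ to obtain the pointwise lower bound
\[
|(f^{n_k})'(y)| \geq \frac{c_1(x)}{2 r_k c_2(x)}
\]
for every $y \in B(x, r_k)$. Since $r_k \downarrow 0$, the right-hand side diverges with $k$, so one can select $n(x) \in \bN$ and $r(x) > 0$ with $|(f^{n(x)})'(y)| \geq 4$ for every $y \in B(x, r(x))$.

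By compactness of $\Lambda_Q$, the open cover $\{B(x, r(x))\}_{x \in \Lambda_Q}$ admits a finite subcover $\{B(x_i, r(x_i))\}_{i=1}^m$, and setting $N := \max_i n(x_i)$ I would iterate as follows: starting from any $y_0 \in \Lambda_Q$, at each step pick a ball of the subcover containing the current point and apply the associated iterate of $f$. After $k$ such steps the orbit has advanced by some $T_k \leq kN$ while the derivative along the orbit has grown by at least $4^k$. To extend the bound from the special times $T_k$ to an arbitrary $m$, I would control the remaining $m - T_k < N$ iterations via $c := \min_{\Lambda_Q} |f'| > 0$ (positive since $f'$ is continuous and non-vanishing on the compact set $\Lambda_Q$). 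The resulting estimate $|(f^m)'(y_0)| \geq \text{const} \cdot 4^{m/N}$ holds for all sufficiently large $m$, uniformly in $y_0$, which is precisely uniform hyperbolicity.

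The essentially only nontrivial input is Theorem~\ref{thm:hyp}; everything after the reduction $\Lambda_Q = H$ is elementary. In particular, no uniformity in $x$ of the constants $c_1(x), c_2(x)$ is required, since for each individual $x$ the divergence of the pointwise lower bound as $r_k \downarrow 0$ already allows us to exceed any fixed universal threshold on a well-chosen metric ball around $x$.
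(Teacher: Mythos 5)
Your proof is correct and takes essentially the same route as the paper: both reduce via Theorem~\ref{thm:hyp} to $\Lambda_Q = H$, extract from the definition of hyperbolic instants a neighborhood of each point on which some iterate of $f$ expands by a definite factor, and then invoke compactness to obtain a finite cover and hence uniform expansion. The only cosmetic difference is that the paper works with cylinder neighborhoods and concludes by observing that the covering refines to a finite uniformly expanding Markov system under a change of symbolic description, whereas you use metric balls and make the uniform exponential estimate explicit by a chain-rule bootstrap; both concluding steps are standard and equivalent.
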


\begin{proof}
If there exist no parabolic periodic points, then, by Theorem~\ref{thm:hyp}, we have $H=\Lambda_Q$.
Every point $x\in H$ has a cylinder neighbourhood $U(x)$ and a time $k(x)$ such that $f^{k(x)}|_{U(x)}$ is uniformly expanding.
As $\Lambda_Q=H$ is compact, from sets $U(x)$ one can choose a finite covering of $\Lambda_Q$.
Hence, $\Lambda_Q$ is equivalent (under a change of symbolic description) to a finite uniformly expanding Markov system.
\end{proof}

We say that the Markov system $(\Lambda_Q,f)$ is $C^1$  \emph{robustly expansive}  if and only if any map $g$  which is a sufficiently small $C^1$ smooth perturbation of $f$ also satisfies~\eqref{eq1}. 

\begin{corollary}\label{cor:uh}
	The Markov system $(\Lambda_Q,f)$ is $C^1$  robustly expansive  if and only if $f|\Lambda_Q$ is uniformly hyperbolic.
\end{corollary}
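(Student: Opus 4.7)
The plan is to prove both implications separately.

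The easy direction $(\Leftarrow)$ follows from the $C^1$-openness of uniform hyperbolicity. If $|f'|\ge\lambda>1$ on $\Lambda_Q$, then by continuity of $f'$ this estimate (with a slightly weaker constant) holds on some open neighborhood $U\supset\Lambda_Q$; any sufficiently $C^1$-small perturbation $g$ of $f$ still satisfies $|g'|\ge\lambda'>1$ on $U$, and its limit set remains in $U$. Cylinder lengths for $g$ therefore decay geometrically, so \eqref{eq1} is satisfied and $f$ is $C^1$ robustly expansive.

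For the converse $(\Rightarrow)$ I argue by contrapositive. Suppose $f|_{\Lambda_Q}$ is not uniformly hyperbolic. By Corollary~\ref{cor:no} there exists a parabolic periodic point $p\in\Lambda_Q$ of some period $k$, with symbolic code $(i_1\ldots i_k)^\infty\in\Sigma_Q$. Expansiveness of $f$ forces $p$ to be the unique fixed point of $g_{i_1\ldots i_k}|_{I_{i_1}}$ in $I_{i_1}=[a,b]$: indeed, any such fixed point would have symbolic code $(i_1\ldots i_k)^\infty$ and therefore lie in $\bigcap_n\Delta_{(i_1\ldots i_k)^n}=\{p\}$ by \eqref{eq1}. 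I now construct $\tilde f=f+\varepsilon\eta$, where $\eta$ is a $C^1$ bump function supported in a small neighborhood of $p$ contained in $R(g_{i_1})$, with $\eta(p)=0$ and $\eta'(p)$ chosen so that $0<(\tilde f^k)'(p)<1$ (if necessary, pass to $\tilde f^{2k}$ to arrange positivity of the derivative at $p$ and orientation-preservation of $\tilde g_{i_1\ldots i_k}$). For $\varepsilon$ small the Markov combinatorics of $\tilde f$ coincide with those of $f$, and $p$ becomes an attracting periodic point of $\tilde f$.

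The heart of the argument is that \eqref{eq1} fails for $\tilde f$. Writing $\tilde g:=\tilde g_{i_1\ldots i_k}$, this is a diffeomorphism from $\bigcup_j I_j$ into $\interior(I_{i_1})$ with $\tilde g(p)=p$ and $\tilde g'(p)=1/(\tilde f^k)'(p)>1$. Therefore the graph of $\tilde g|_{[a,b]}$ lies strictly above the diagonal at $a$, strictly below it at $b$, and crosses the diagonal transversally from below to above at $p$. The intermediate value theorem forces the existence of two additional fixed points $q_1\in(a,p)$ and $q_2\in(p,b)$ of $\tilde g$; transversality at each yields $|\tilde g'(q_i)|<1$, so both are attracting for $\tilde g$. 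For $\varepsilon$ sufficiently small, $q_1$ and $q_2$ are the only new fixed points of $\tilde g$ in $I_{i_1}$, so iterates $\tilde g^n$ applied to any point of $[a,p)$ converge monotonically to $q_1$ and those applied to points of $(p,b]$ converge monotonically to $q_2$. Since $\tilde\Delta_{(i_1\ldots i_k)^n}=\tilde g^{n-1}(\Delta^0)$ with $\Delta^0=\tilde g_{i_1\ldots i_{k-1}}(I_{i_k})=[L_0,R_0]\subset I_{i_1}$ a fixed sub-interval containing $p$ in its interior, we have $\tilde g^{n-1}(L_0)\to q_1$ and $\tilde g^{n-1}(R_0)\to q_2$, so $|\tilde\Delta_{(i_1\ldots i_k)^n}|\to q_2-q_1>0$. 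Hence $d_{kn}\not\to 0$ and \eqref{eq1} fails for $\tilde f$, proving that $f$ is not $C^1$ robustly expansive.

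The main technical obstacle is the emergence of the two new attracting fixed points $q_1,q_2$: one must verify that the tangential touch of the unperturbed graph at $p$ (necessarily of odd order, by uniqueness of the fixed point under \eqref{eq1}) breaks under the perturbation into a transverse crossing with slope $>1$, enabling the intermediate value theorem. The subsequent monotone convergence of iterates to $q_1$ and $q_2$ then follows directly from the sign of $\tilde g-\mathrm{id}$ on each of the four intervals $(a,q_1)$, $(q_1,p)$, $(p,q_2)$, and $(q_2,b)$.
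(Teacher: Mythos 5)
Your proof is correct and takes essentially the same route as the paper's: make the parabolic periodic point attracting by an arbitrarily small $C^1$ perturbation, and show that the cylinders coded by the periodic block then fail to shrink, violating \eqref{eq1}. The paper finishes the hard direction more tersely, simply observing that the perturbed map admits a forward-invariant neighbourhood $U$ of $x$ (i.e.\ $g(U)\subset\interior(U)$), which immediately traps the cylinders through $x$; you instead pass to the inverse branch $\tilde g=\tilde g_{i_1\ldots i_k}$ and locate fixed points $q_1<p<q_2$ to which the cylinder endpoints converge. Both establish the same thing; your version is more explicit but carries some unneeded claims — you do not need $q_1,q_2$ to be the only new fixed points, nor the crossings to be transversal, since the monotone limits of $\tilde g^{n}(L_0)$ and $\tilde g^{n}(R_0)$ are automatically fixed points of $\tilde g$ lying strictly on either side of $p$ (because $\tilde g'(p)>1$ makes $p$ repelling for $\tilde g$, so a monotone sequence approaching $p$ from one side is impossible). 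The easy direction is the standard $C^1$-openness of hyperbolicity, identical to the paper.
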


\begin{proof}
 Let $(\Lambda_Q,f)$ be a $C^{1+\Lip}$ expansive and transitive Markov system.
 By Corollary~\ref{cor:finite}  there can exist at most a finite number of parabolic periodic points for $f|\Lambda_Q$.
 Let  us assume that there exists at least one parabolic periodic point $x\in\Lambda_Q$, and we may assume that $x$ is a fixed point. Then we can find a perturbation $g$ of $f$, which keeps $x$ fixed, which is arbitrarily $C^1$ close to $f$
  and for which there is a neighborhood $U$ of $x$ such that $g(U)\subset {\rm int}(U)$, violating~\eqref{eq1}. 
  Hence $f|\Lambda_Q$ must be uniformly hyperbolic by Corollary~\ref{cor:no}.\\
 If $f|\Lambda_Q$ is uniformly hyperbolic, then any  small $C^1$ perturbation is also hyperbolic, and hence expansive.
\end{proof}

\section{Proof of Theorem~\ref{Mmain}}\label{sec:5}

Given $\mu\in\cM$, we denote by
\[
\chi(\mu)\eqdef \int\log\lvert f'\rvert d\mu 
\]
the \emph{Lyapunov exponent} of $\mu$.

By Theorem~\ref{thm:hyp}, the set $\Lambda_Q\setminus H$ is at most countable and hence has zero Lebesgue measure. Thus, it follows from Proposition~\ref{prop:zero} that the Lebesgue measure of $\Lambda_Q$ is zero. 
  
We now prove the dimension results. From Theorem~\ref{thm:hyp} we know that $\dim_{\rm H}\Lambda_Q\setminus H=0$, and hence
\begin{equation}\label{eq0}
  \dim_{\rm H}\Lambda_Q\le t_c
\end{equation}
follows from Proposition~\ref{prop:dimH}. 
    
    Note that $f|\Lambda_Q$ is an open and expansive map. By~\cite[Theorem 3.12]{DenUrb:91}, for every $t\ge 0$ satisfying $P(-t\log\lvert f'\rvert)=0$ there exists a $t$-conformal measure. This  implies that 
    \begin{equation}\label{eq111}
    t_c\le t_0.
    \end{equation}
    
Denote 
\[
D\eqdef \sup_\mu \dim_{\rm H}\mu,
\] 
where the supremum is taken over all ergodic $f$-invariant measures supported on $\Lambda_Q$ with positive entropy. 
$D$ is commonly refered to as the \emph{dynamical dimension} of $f|\Lambda_Q$.
Assume that $D>0$ and consider $0<t<D$. It follows from~\cite{HofRai:92} that there exists an ergodic $f$-invariant measure $\mu$ with positive entropy such that $t< h_\mu(f)/\chi(\mu)$. This implies
	\[
	0< h_\mu(f)-t\chi(\mu)\le P(-t\log\lvert f'\rvert)
	\]
	and hence $t<t_0$. Note that $0\le t_0$ trivially holds true. This implies $D \le t_0$.

	Notice that for every ergodic $f$-invariant measure $\nu$ we have $\chi(\nu)\ge 0$, by Lemma~\ref{genull}.
	Suppose now that $P(-t\log\lvert f'\rvert)>0$ for some $t\ge 0$. Then, by the variational principle, there exists an ergodic $f$-invariant measure $\nu$ such that $ h_\nu(f)-t\chi(\nu)>0$ and thus $h_\nu(f)>0$. It follows then from~\cite{HofRai:92} that 
	\[
	t<\frac{h_\nu(f)}{\chi(\nu)} =\dim_{\rm H}\nu\le D.
	\]
    	This implies that $P(-t\log\lvert f'\rvert)\le0$ for every $t\ge D$, and hence we have shown that 
	\begin{equation}\label{eq2}
	 t_0= D.
	\end{equation}
Notice that, by definition, we have 
\begin{equation}\label{eq3}
D\le \dim_{\rm H}\Lambda_Q.
\end{equation}
With~\eqref{eq0}--\eqref{eq3} this proves the first part of the assertion.
	
The following construction of conformal measures omitting exceptional points is nowadays standard (see~\cite{Urb:96} or~\cite{DenUrb:91}). We repeat it here because it will be used for Example~\ref{example}. 
	
\begin{proposition}\label{mmart}
	If $f$ is (piecewise) real analytic, then there exists a $t_0$-dimensional conformal measure $\nu$ such that $\nu(\Lambda_Q\setminus H)=0$.
\end{proposition}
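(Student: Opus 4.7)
My plan is to invoke a Patterson--Sullivan-type construction with the reference point chosen in $H$, and to use real analyticity to rule out atoms at the parabolic periodic points in the resulting weak$^*$ limit. The starting observation is that by Theorem~\ref{thm:hyp} together with Corollary~\ref{cor:finite}, the set $\Lambda_Q \setminus H$ consists of finitely many parabolic periodic orbits and their countably many preimages. The conformality relation applied at a preimage $q$ of a parabolic periodic $p$ reads $\nu(\{p\}) = |f'(q)|^{t_0}\nu(\{q\})$; since the $g_i$ are local diffeomorphisms one has $|f'(q)|\in(0,\infty)$ and therefore $\nu(\{q\}) = |f'(q)|^{-t_0}\nu(\{p\})$. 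Iterating and using countable additivity, the problem reduces to constructing a $t_0$-conformal probability measure $\nu$ with $\nu(\{p\})=0$ for each of the finitely many parabolic periodic points $p$.

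The construction then goes as follows. Fix $z \in H$ that is not itself a preimage of any parabolic periodic point (such $z$ form a dense subset by transitivity). For each $s > t_0$ set
\[
\mu_s \eqdef \frac{1}{Z_s(z)} \sum_{n \ge 1}\; \sum_{f^n(y) = z,\, y\in \Lambda_Q} |(f^n)'(y)|^{-s}\,\delta_y,
\]
inserting a Patterson slowly-varying factor into the weights in the boundary case when the Poincar\'e series $Z_{t_0}(z)$ converges, so as to force divergence as $s\searrow t_0$. Each $\mu_s$ is a probability measure supported on the backward orbit of $z$, which by the inverse-branch argument underlying Proposition~\ref{prop:dist} lies entirely in $H$. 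A standard Patterson--Sullivan argument, essentially as in~\cite{DenUrb:91}, shows that any weak$^*$ accumulation point $\nu$ of $\{\mu_s\}_{s > t_0}$ as $s \searrow t_0$ is a $t_0$-conformal probability measure on $\Lambda_Q$.

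The key step is to show $\nu(\{p\}) = 0$ for each parabolic periodic point $p$. Let $k$ be the period of $p$; passing to $f^k$ and using the real analytic normal form
\[
f^k(x) - p = (x-p) + a(x-p)^{m+1} + O\bigl((x-p)^{m+2}\bigr),\qquad a \neq 0,\; m\ge 1,
\]
the Fatou flower theorem yields sharp asymptotics for the backward orbit of any fixed base point inside an attracting petal of $p$:
\[
|y_n - p| \asymp n^{-1/m},\qquad |(f^{nk})'(y_n)| \asymp n^{(m+1)/m}.
\]
Substituted into the sum defining $\mu_s$, the mass of the ball $B(p,\varepsilon)$ is controlled by a constant multiple of $\sum_{n\ge N(\varepsilon)} n^{-s(m+1)/m}$, where $N(\varepsilon)\to\infty$ as $\varepsilon\to 0$. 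The dimension identity $t_0 = \dim_{\rm H}\Lambda_Q$ established in the first part of the Main Theorem, combined with the fact that the orbit-closure of a parabolic petal already contributes Hausdorff dimension at least $m/(m+1)$, forces $t_0\ge m/(m+1)$; hence the tail sum is summable, tends to zero as $\varepsilon\to 0$ uniformly for $s$ near $t_0$ (the boundary equality being absorbed by the Patterson factor), and this uniform smallness passes to the weak$^*$ limit, giving $\nu(\{p\}) = 0$.

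The main obstacle is precisely the quantitative tail bound in the previous paragraph: the polynomial rate $n^{-1/m}$ is a direct consequence of the Fatou normal form, which is available only for real analytic $f$. For a merely $C^\infty$ parabolic tangency (as in Example~\ref{example}), one can engineer escape from $p$ slower than any polynomial, in which case the Patterson sum at $p$ behaves pathologically and the weak$^*$ limit may carry an atom at $p$. All remaining ingredients---construction of $\mu_s$, extraction of a weak$^*$ accumulation point, and verification of $t_0$-conformality of the limit---are standard in the Patterson--Sullivan theory of nonuniformly hyperbolic conformal dynamics and follow~\cite{Urb:96,DenUrb:91} essentially verbatim.
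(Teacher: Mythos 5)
Your Patterson--Sullivan approach is genuinely different from the paper's. The paper constructs an increasing chain of uniformly hyperbolic subsystems $\Lambda_n\subset\Lambda_Q$ (obtained by removing the cylinders $\Delta_n(x)$ around the finitely many parabolic periodic points $x$), takes the $t_n$-conformal measures $\nu_n$ on these subsystems via Walters' theorem, and passes to a weak$^*$ limit; you instead take weak$^*$ limits of normalized Poincar\'e-series measures $\mu_s$ based at a point $z\in H$ as $s\searrow t_0$. Both routes are plausible, and both run into the same technical crux---ruling out atoms at the parabolic periodic points---but that is exactly where your argument has a gap.

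The specific gap is in the sentence asserting that ``the orbit-closure of a parabolic petal already contributes Hausdorff dimension at least $m/(m+1)$, forces $t_0\ge m/(m+1)$.'' The orbit closure of a petal is a countable set (an orbit plus the parabolic point), so it has Hausdorff dimension zero; the statement as written is false. Even granting the intended meaning (that a parabolic IFS with tangency order $m+1$ has $\dim_{\rm H}\Lambda_Q\ge m/(m+1)$), you obtain only the non-strict inequality, and then you offload the boundary case $t_0=m/(m+1)$ onto the Patterson normalizing factor. But at that exponent the parabolic tail satisfies $\sum_{n\ge N} n^{-s(m+1)/m}\to\infty$ as $s\searrow t_0$ for every $N$, and this tail \emph{is} the dominant contribution to the Poincar\'e series $Z_s(z)$; the ratio $\mu_s(B(p,\varepsilon)) \approx \sum_{n\ge N(\varepsilon)} n^{-s(m+1)/m}/Z_s(z)$ need not tend to zero---in fact it tends to a definite positive number, so the weak$^*$ limit \emph{would} carry an atom at $p$. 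The Patterson factor alone cannot rescue this.

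What is actually needed, and what the paper does, is to establish the strict inequality $\beta t_0>1$, i.e.\ $t_0>m/(m+1)$. The paper first constructs the $t_0$-conformal weak$^*$ limit $\nu$, notes it is a finite measure positive on cylinders (Lemma~\ref{nonzero}), and then from the Schwartz-lemma distortion bound $\dist f^k|_{D^\pm_{M+k}(x)}\le\lambda$ derives $\nu(D^\pm_{M+k}(x))\ge\lambda^{-1}|D^\pm_{M+k}(x)|^{t_0}|D^\pm_M(x)|^{-t_0}\nu(D^\pm_M(x))$. Summing over $k$ and using finiteness of $\nu$ forces $\sum_\ell|D^\pm_\ell(x)|^{t_0}<\infty$, which by the real-analytic estimate $|D^\pm_\ell(x)|\asymp\ell^{-\beta}$ gives $\beta t_0>1$ strictly. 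Only then does one go back to the matching upper bound and conclude $\nu(\Delta_j(x))\to0$. Your Patterson--Sullivan construction could be completed by the same two-step finiteness argument applied to the limit $\nu$ (replacing the appeal to the dimension identity and the Patterson factor), but as written the argument does not close.
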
		
	
\begin{proof}
	Given $n\ge 1$, define
	\[
	Q_n\eqdef Q \cup \bigcup (i_1\ldots i_n),
	\]
	where the union is taken over all words $(i_1\ldots i_n)$ for which $\Delta_{i_1\ldots i_n}=\Delta_n(x)$ for some parabolic periodic point $x\in \Lambda_Q$.
	Recall the definition~\eqref{defQ}. By Proposition~\ref{prop:irred} there exists a set $\Lambda_n\subset \Lambda_{Q_n}$ which is non-empty for large enough $n$ and which is $f$-invariant and, by construction and by Corollary~\ref{cor:no}, uniformly expanding with respect to $f$. By~\cite{Wal:78} there exists a $t_n$-conformal measure supported on $\Lambda_n$, denote
	\[
	t_n\eqdef \dim_{\rm H}\Lambda_n.
	\]
Notice that $(t_n)_n$ is non-decreasing sequence.
Moreover, $t_n$ is the unique zero of the equation $P_{f|\Lambda_n}(\varphi_t)=0$, where $P_{f|\Lambda_n}$ denotes the topological pressure with respect to $f|\Lambda_n$. 	
It follows that $t_n\le t_0$ for every $n\ge1$.  There exists a subsequence $(\nu_{n_k})_k$ and a number $t$ such that $\nu_{n_k}$ converges to some probability measure $\nu$ in the weak$\ast$ topology and that $t_{n_k}$ converges to $t$. Let $A\subset \Lambda_Q$ be a Borel set such that $f|A$ is injective. Note that, by weak$\ast$ convergence and by conformality, we have
\[
\nu(f(A)) = \lim_{k\to\infty}\nu_{n_k}(A) 
= \lim_{k\to\infty} \int_A\lvert f'(x)\rvert^{t_{n_k}} d\nu_{n_k} 
=\int_A\lvert f'(x)\rvert^t d\nu,
\]
and hence $\nu$ is $t$-conformal.  Since there is no $t'$-conformal measure for $t'<t_0$, we must have $t=t_0$.

	We now prove that $\nu(\Lambda_Q\setminus H)=0$. It is sufficient to prove that there are no atoms at parabolic points. We consider a  parabolic fixed point $x\in\Lambda_Q$ at which $f$ is locally orientation preserving, and show that we have $\nu(x)=0$.  The general case then follows from the consideration of some iterate of the map $f$. Let us assume that there exists such a point $x\in\Lambda_Q$. Given $n\ge 1$, denote by $D^\pm_n(x)$ the right and the left interval of the set $\Delta_n(x)\setminus \Delta_{n+1}(x)$, respectively. Note that 
\begin{equation}\label{iter}
f(D^\pm_n(x))=D^\pm_{n-1}(x). 
\end{equation} 
In particular, we have
\[
\sum_{k=0}^\infty \lvert D_k^\pm(x)\rvert 
<\lvert I\rvert.
\]
Given some number $M\ge 1$, from Lemma~\ref{lem:sch} we can conclude that for every $k\ge 1$
\[
\dist f^k|_{D^\pm_{M+k}(x)} \le \lambda ,
\]
where $\lambda$ is given in~\eqref{lamdef}.
Using this property together with the conformality of each of the measures $\nu_n$, for every $k\ge 1$ we derive that
\begin{multline}\label{martin}
\nu_n(D^\pm_M(x)))
= \nu_n(f^k(D^\pm_{M+k}(x)))
= \int_{D^\pm_{M+k}(x)} \lvert (f^k)'\rvert^{t_n}d\nu_n\\
\ge \frac{1}{\lambda}
      \frac{\lvert D^\pm_{M}(x)\rvert^{t_n}}{\lvert D^\pm_{M+k}(x)\rvert^{t_n}}
      \nu_n(D^\pm_{M+k}(x))
\end{multline}
if $M+k\le n$ and $\nu_n(D^\pm_{M+k}(x))) =0$ else, and analogously that
\begin{equation}\label{annne}
\nu_n(D^\pm_M(x))) \le
\lambda
      \frac{\lvert D^\pm_{M}(x)\rvert^{t_n}}{\lvert D^\pm_{M+k}(x)\rvert^{t_n}}
      \nu_n(D^\pm_{M+k}(x)).
\end{equation}
The same estimations are valid for $(\nu,t_0)$ in place of $(\nu_n, t_n)$.
For fixed $j\ge 1$, with~\eqref{iter} we can conclude that
\[
\nu(\Delta_j(x))=\lim_{n\to\infty}\nu_n(\Delta_j(x))
= \lim_{n\to\infty} \left(\sum_{\ell=j}^\infty\nu_n(D_\ell^-(x)) +  
	\sum_{\ell=j}^\infty\nu_n(D_\ell^+(x))\right) .
\]
For every $j> M$ we obtain from~\eqref{martin} and~\eqref{annne}
\[
\frac{\nu_n(D^\pm_M(x))}{\lambda\lvert D^\pm_M(x)\rvert^{t_n}}
	\sum_{\ell=j}^n\lvert D^\pm_{\ell}(x)\rvert^{t_n}\le
	\sum_{\ell=j}^n\nu_n(D^\pm_\ell(x))
\le  \lambda\frac{\nu_n(D^\pm_M(x))}{\lvert D^\pm_M(x)\rvert^{t_n}}
	\sum_{\ell=j}^n\lvert D^\pm_{\ell}(x)\rvert^{t_n}
\]
and hence
\begin{multline} \label{e2ti}
\nu(\Delta_j(x)) \\\le
\lambda\left(
\frac{\nu(D^-_M(x))}{\lvert D^-_M(x)\rvert^{t_0}}
	\sum_{\ell=j}^\infty\lvert D^-_{\ell}(x)\rvert^{t_n}
	+
	\frac{\nu(D^+_M(x))}{\lvert D^+_M(x)\rvert^{t_0}}
	\sum_{\ell=j}^\infty\lvert D^+_{\ell}(x)\rvert^{t_n}\right), 
\end{multline}
together with the analogous lower bound.
If we know that each of the two series in \eqref{e2ti} converges then we can conclude that $\lim_{j\to\infty}\nu(\Delta_j(x))=0$ and hence that $\nu(x)=0$.

If $f$ is piecewise real analytic then we can expand it into a power series inside some one-sided neighbourhood of $x$:  

\[
f(y)=y + c (y-x)^b + O((y-x)^{b+1})
\]
for $y\in (x,x+\varepsilon)$ and similarly (possibly with different $c$ and $b$) in $(x-\varepsilon, x)$.
It implies
\begin{equation} \label{estimm}
C_1 \ell^{-\beta} \leq \lvert D^+_\ell(x)\rvert \leq C_2 \ell^{-\beta}
\end{equation}	
for $\beta=b/(b-1)$ and for some positive numbers $C_1, C_2$, for $\ell$ big enough.
We assume that $M$ was big enough so that \eqref{estimm} is true for all $\ell\geq M$.
 
Existence and finiteness of $t_0$-conformal measure $\nu$ together with \eqref{martin} implies that $\beta t_0>1$. Given $\varepsilon>0$ such that $\beta(t_0-\varepsilon)>1$, we obtain
\[
\sum_{\ell=M}^\infty\lvert D^+_\ell(x)\rvert ^{t_0-\varepsilon} < +\infty,
\]
hence the second series in \eqref{e2ti} converges.
The first series converges for the analogous reason and we are done.
\end{proof}

The real analyticity assumption is only necessary to obtain \eqref{estimm}.
We do not want to assume \eqref{estimm} outright, however, because we have \emph{a priori} no control on the location of the parabolic points.
	
We denote by $P^s$ the $s$-dimensional \emph{packing measure}.	
		
\begin{corollary} \label{cor:pt0}
	If $f$ is (piecewise) real analytic then $P^{t_0}(H)=P^{t_0}(\Lambda_Q)>0$.
\end{corollary}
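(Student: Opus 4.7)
The equality $P^{t_0}(H)=P^{t_0}(\Lambda_Q)$ is immediate. By Theorem~\ref{thm:hyp} the set $\Lambda_Q\setminus H$ consists of the parabolic periodic points and their backward orbits, and by Corollary~\ref{cor:finite} there are only finitely many parabolic periodic orbits, so $\Lambda_Q\setminus H$ is countable. Excluding the trivial case in which $\Lambda_Q$ is a single periodic orbit, we have $t_0>0$; since $t_0$-packing measure annihilates countable sets, the equality follows.

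For the strict positivity, the plan is to apply the standard density comparison theorem for packing measures (cf.~\cite{Fal:97}): if $\nu$ is a finite Borel measure and $A$ is a Borel set with $\nu(A)>0$ for which
\[
\overline{\Theta}^{t_0}(\nu,x)\eqdef\limsup_{r\to 0^+}\frac{\nu(B(x,r))}{(2r)^{t_0}}\le\lambda
\]
for $\nu$-a.e.\ $x\in A$, then $P^{t_0}(A)\ge\nu(A)/\lambda$. I take $\nu$ to be the $t_0$-conformal measure constructed in Proposition~\ref{mmart}; by construction $\nu(\Lambda_Q\setminus H)=0$, hence $\nu(H)=1$. It therefore suffices to exhibit a Borel set $A\subset H$ of positive $\nu$-measure on which $\overline{\Theta}^{t_0}(\nu,\cdot)$ is uniformly bounded.

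The pointwise bound $\nu(B(x,r_k))\le C\,r_k^{t_0}$ along the hyperbolic-instant radii $r_k\to 0$ for $x\in H$ is essentially contained in the proof of Proposition~\ref{prop:dimH}: it comes from $t_0$-conformality combined with the bounded distortion of $f^{n_k}$ on $B(x,r_k)$. The obstacle is the \emph{uniform-in-$r$} version, since the packing density theorem uses the $\limsup$ over all small $r$, not merely along the subsequence $r_k$. I would bridge adjacent hyperbolic scales as follows. For orbit segments staying at definite distance from the (finitely many) parabolic periodic points, the hyperbolic-instant radii form a geometric sequence of bounded ratio, so $\nu(B(x,r))\le\nu(B(x,r_k))\le C\,r_k^{t_0}\le C'\,r^{t_0}$ for $r\in(r_{k+1},r_k]$. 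For orbit excursions into a neighborhood of a parabolic periodic point $p$, the near-parabolic two-sided estimates~\eqref{martin}--\eqref{annne} from the proof of Proposition~\ref{mmart}, together with the real-analytic size estimate~\eqref{estimm}, yield $\nu(D^\pm_\ell(p))\asymp\lvert D^\pm_\ell(p)\rvert^{t_0}$ on every scale of the parabolic cascade, which controls $\nu(B(x,r))/r^{t_0}$ throughout the excursion and interpolates between the hyperbolic instants straddling it.

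A Poincar\'e-type recurrence argument then produces a set $A\subset H$ of positive $\nu$-measure on which the constants in the two cases above are uniform; applying the density theorem to $\nu$ on $A$ gives $P^{t_0}(\Lambda_Q)=P^{t_0}(H)\ge P^{t_0}(A)>0$. The technical heart of the proof is precisely this uniform upper density bound; everything else is a direct assembly of the conformal-measure construction from Proposition~\ref{mmart} with the geometric estimates already in hand.
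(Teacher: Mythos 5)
Your first paragraph (the equality $P^{t_0}(H)=P^{t_0}(\Lambda_Q)$) is fine. But the second half of the proposal is built on a misremembered density theorem, and this leads you to invent a ``technical heart'' that the proof does not in fact need.

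The density comparison theorem for \emph{packing} measure (Falconer~\cite{Fal:97}, Prop.~2.2) controls $P^s(A)$ from below under a bound on the \emph{lower} density: if $\liminf_{r\to 0^+}\nu(B(x,r))/(2r)^s\le c$ for every $x\in A$, then $P^s(A)\ge\nu(A)/c$ (up to a dimensional constant). It does \emph{not} require a bound on $\limsup_{r\to 0^+}\nu(B(x,r))/(2r)^s$ — that is the hypothesis in the analogous Hausdorff-measure statement. Since a $\liminf$ is computed along \emph{some} sequence of radii tending to $0$, the bound $\nu(B(x,r_k))\le C(x)\,r_k^{t_0}$ along the hyperbolic-instant radii $r_k$ — which, as you note, drops out of $t_0$-conformality plus bounded distortion of $f^{n_k}$ on $B(x,r_k)$, exactly as in Proposition~\ref{prop:dimH} but with the inequalities reversed — already gives $\liminf_{r\searrow 0}\nu(B(x,r))/r^{t_0}\le (c_1^{-1}c_2)^{t_0}$. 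That is the whole proof. The ``bridging adjacent hyperbolic scales'' step, including the near-parabolic two-sided estimates and the distinction between expanding segments and parabolic excursions, is unnecessary; it is precisely what the $\liminf$ formulation spares you.

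Two smaller remarks. First, the Poincar\'e-type recurrence argument to get a set $A$ on which the constants are uniform is also not needed: the constants $c_1,c_2$ depend on $x$, but one can decompose $H=\bigcup_n A_n$ with $A_n=\{x\in H:\liminf\nu(B(x,r))/r^{t_0}\le n\}$, and since $\nu(H)=1$ some $A_n$ has positive measure, whence $P^{t_0}(H)\ge\nu(A_n)/n>0$. Second, even if you could push through the $\limsup$ bridging with uniform constants, you would be proving a stronger statement than is needed for positivity of the packing measure; if that effort were left as a sketch (as it is here) the proof would be incomplete, whereas the paper's route via the $\liminf$ is short and self-contained.
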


\begin{proof}
The proof is analogous to Proposition \ref{prop:dimH}.
By Proposition~\ref{mmart}, there exists a $t_0$-conformal measure $\nu$ supported on $H$.
Let $x\in H$ and consider the sequences $(n_k)_k$ and $(r_k)_k$ and positive constants $c_1$ and $c_2$  associated to $x$.
We have

 \begin{equation}\label{mann}
 2 r_k\cdot \sup_{y\in B(x,r_k)}\lvert (f^{n_k})'(y)\rvert 
  \geq \diam f^{n_k}(B(x,r_k)) \ge c_1.
  \end{equation}
 By $t_0$-conformality of the measure $\nu$, we have
  \begin{eqnarray*}
  \nu(f^{n_k}(B(x,r_k))) 
  &=& \int_{B(x,r_k)}\lvert (f^{n_k})'(y)\rvert^{t_0} d\nu(y)\\
  &\ge& c_2^{-t_0}\sup_{y\in B(x,r_k)}\lvert (f^{n_k})'(y)\rvert^{t_0}
        \nu(B(x,r_k)).
  \end{eqnarray*}

With~\eqref{mann} we obtain
  \[
  1\ge \nu(f^{n_k}(B(x,r_k))) 
  \ge \left(c_1 c_2^{-1} r_k^{-1}\right)^{t_0}\nu(B(x,r_k))
  \]
  and hence
  \[
\liminf_{r\searrow 0} \frac {\nu(B(x,r))} {r^{t_0}} \leq \left(c_1^{-1} c_2 \right)^{t_0}.
  \]
  The assertion follows from the Frostman Lemma, see \cite{Fal:97}.
  \end{proof}

\begin{corollary}
	If $f$ is (piecewise) real analytic then $t_0<1$.
\end{corollary}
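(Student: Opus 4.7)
The plan is to argue by contradiction. Since $\Lambda_Q\subset\bR$, Theorem~\ref{Mmain} already gives $t_0=\dim_{\rm H}\Lambda_Q\leq 1$, so it suffices to exclude the boundary case $t_0=1$. Assume, for contradiction, that $t_0=1$. Proposition~\ref{mmart} then supplies a $1$-conformal probability measure $\nu$ on $\Lambda_Q$ with $\nu(\Lambda_Q\setminus H)=0$, so in particular $\nu(H)=1$. The first part of Theorem~\ref{Mmain} asserts $\Leb(\Lambda_Q)=0$; since $\nu$ is supported on $\Lambda_Q$, this forces $\nu$ and $\Leb$ to be mutually singular as Borel measures on $\bR$.

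The Lebesgue--Besicovitch differentiation theorem applied to the singular pair $(\nu,\Leb)$ on $\bR$ guarantees that for $\nu$-a.e.\ $x$,
\[
\lim_{r\searrow 0}\frac{\nu(B(x,r))}{2r}=+\infty,
\]
and hence in particular for $\nu$-a.e.\ $x\in H$. On the other hand, the density estimate produced inside the proof of Corollary~\ref{cor:pt0} shows that for \emph{every} $x\in H$, with its associated constants $c_1,c_2$,
\[
\liminf_{r\searrow 0}\frac{\nu(B(x,r))}{r^{t_0}}\leq\bigl(c_1^{-1}c_2\bigr)^{t_0}<\infty .
\]
Specialising to $t_0=1$ bounds $\liminf_{r\searrow 0}\nu(B(x,r))/r$ by a finite quantity at \emph{every} point of the full $\nu$-measure set $H$, which directly contradicts the preceding Besicovitch conclusion. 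Consequently $t_0<1$.

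The only input needing any care is the mutual singularity $\nu\perp\Leb$, and this is immediate from $\Leb(\Lambda_Q)=0$ together with the fact that $\nu$ is carried by $\Lambda_Q$. Everything else is a direct assembly of Proposition~\ref{mmart}, the $t_0=1$ case of the density bound contained in the proof of Corollary~\ref{cor:pt0}, and the classical differentiation theorem for Radon measures on the line. I do not anticipate any further obstacle.
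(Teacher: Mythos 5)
Your proof is correct, and it takes a route that is related to but distinct from the paper's. The paper derives the corollary in one line from Proposition~\ref{prop:zero} ($\Leb(H)=0$) and Corollary~\ref{cor:pt0} ($P^{t_0}(H)>0$): on the real line the one-dimensional packing measure coincides with Lebesgue measure (up to a normalising constant), so $t_0=1$ would make those two statements contradict each other. You instead unpack the density estimate that sits inside the proof of Corollary~\ref{cor:pt0} — $\liminf_{r\searrow 0}\nu(B(x,r))/r^{t_0}<\infty$ at every $x\in H$ — and play it off against the Besicovitch differentiation theorem for the mutually singular pair $(\nu,\Leb)$, which forces $\nu(B(x,r))/r\to\infty$ at $\nu$-a.e.\ point. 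Both routes rest on the same two pillars (Proposition~\ref{mmart} and the pointwise density bound from the $t_0$-conformality of $\nu$), and both use a covering-type theorem; the paper's choice of Frostman plus $P^1=\Leb$ gives a crisper one-line deduction and also yields the packing-measure statement as a bonus, while your version is self-contained at the cost of re-deriving the density bound rather than citing the corollary. One small remark: you invoke $\Leb(\Lambda_Q)=0$ to get $\nu\perp\Leb$, which is fine and non-circular, since that part of Theorem~\ref{Mmain} does not depend on analyticity; the paper uses the a priori stronger (but here equivalent in role) fact $\Leb(H)=0$ from Proposition~\ref{prop:zero}.
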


\begin{proof}
	This is an immediate consequence of Proposition~\ref{prop:zero} and Corollary~\ref{cor:pt0}. 
\end{proof}

We finally give the details to the example claimed in Section~\ref{sec:intro}. We demonstrate that the existence of a $t_0$-conformal measure which is supported on the complement of the parabolic points is not guaranteed if the map is not real analytic.
Consider the map 
\begin{equation}\label{ugu}
f(x)\eqdef 
\begin{cases} x+x^2e^{-1/x} &\text{ if }x\in[0,a]\\10x-9&\text{ if }x\in[\frac{9}{10},1]\end{cases},
\end{equation}
where $a\approx 0.8095$ is the solution of the equation  $a^2e^{-1/a}+a=1$.

Note that for $n\ge 1$ large enough we have 
\[
\frac{1}{\log(n-1)} 
\sim \frac{1}{\log n-\frac{1}{n} }
= \frac{1}{\log n\left(1-\frac{1}{n\log n}\right)}
\sim \frac{1}{\log n} + \frac{1}{n(\log n)^2} = f\left(\frac{1}{\log n}\right). 
\]
More precisely, 
\[
\frac{1}{\log (n-\frac 1 2)}<f\left(\frac{1}{\log n} \right)<\frac{1}{\log(n-2)}
\]
for $n$ big enough.
Hence, from~\eqref{martin} we derive that $\lvert D_k^+(0)\rvert \sim \frac{1}{k(\log k)^2}$ for every $k$ large enough. Obviously,
\[
\sum_{k=1}^\infty\lvert D^+_k(0)\rvert<+\infty,
\]
but, for any $t<1$ we have
\[
\sum_{k=1}^\infty\lvert D^+_k(0)\rvert^t =+\infty.
\]
and hence there cannot exist any $t$-conformal measure for $t<1$.
This implies that the Hausdorff dimension of the limit set $\Lambda_Q$ equals 1. However, the one-dimensional packing measure, which is equal to its  Lebesgue measure, is 0. We note that a slightly different version of the above example was studied in~\cite{Tha:83}.

\bibliographystyle{amsplain}

\end{document}